\documentclass[11pt,twoside]{article}
\usepackage{multicol}
\usepackage{amsmath}
\usepackage{faktor}
\usepackage{amssymb}
\usepackage[svgnames]{xcolor}
\usepackage{array}
\usepackage{a4wide}
\usepackage{amsthm}

\usepackage{subfigure}
\usepackage{pst-node}
\usepackage[colorlinks=true,linkcolor=blue,citecolor=blue]{hyperref}
\usepackage{mathtools}
\usepackage{tikz-cd} 
\usetikzlibrary{arrows,shapes,positioning}
\usepackage{graphicx}
\usepackage{tikz}
\usepackage{wrapfig}
\usetikzlibrary{arrows}
\usetikzlibrary{decorations.markings}
\usepackage[nottoc]{tocbibind}
\usepackage{cite}
\usepackage{fancyhdr}
\usepackage{graphicx}
\usetikzlibrary{calc,arrows}
\usepackage{mathtools}

\newtheorem{theorem}{Theorem}[section]

\newtheorem{lem}[theorem]{Lemma}

\theoremstyle{definition}
\newtheorem{Def}[theorem]{Definition}

\newtheorem{rem}[theorem]{Remark}

  \newcommand{\properideal}{%
  \mathrel{\ooalign{$\lneq$\cr\raise.22ex\hbox{$\lhd$}\cr}}}
  \tikzstyle directed=[postaction={decorate,decoration={markings,
    mark=at position .4 with {\arrow[very thick, blue] {stealth} }}}]
\tikzstyle reverse directed=[postaction={decorate,decoration={markings,
    mark=at position .8 with {\arrowreversed[very thick, blue]{stealth}}}}]
\numberwithin{equation}{section}

\title{A Hamiltonian approach for point vortices on non-orientable surfaces II: the Klein bottle}
\author{Nataliya A. Balabanova}
\def\defn#1{{\itshape\bfseries#1}}
\setlength{\headheight}{14pt}
 \lhead[\color{DimGrey}\small\textsc{N.A. Balabanova}]{\color{DimGrey}\textsc{Vortex motion on the Klein bottle}}
 \rhead{\color{DimGrey}\thepage}
 \fancyfoot{}

\newcommand{\Gt}{\widetilde{G}}
\newcommand{\Rt}{\widetilde{R}}

\begin{document}

\maketitle
\begin{abstract}
   This is the second of  two companion papers dedicated to the  investigation of vortex motion on non-orientable surfaces.
   The first paper of the pair is predominantly concerned with establishing the Hamiltonian approach to systems of point vortices on non-orientable manifolds and investigating the limits of the intrinsic (restricted to the non-orientable manifold) approach. In addition, point vortex motion on the M\"obius band is closely examined. In this paper, we investigate dynamics of one and two point vortices on the Klein bottle 
 through establishing explicit forms of the Hamiltonian, equations of motion on the charts, describing relative equilibria, etc. 

 \end{abstract}
   \tableofcontents 
\section{Introduction}

This work is the second of two companion papers dedicated to point vortex motion on non-orientable manifolds. 

In the first paper of the pair \cite{balabanova-montaldi} (henceforth referred to as Part I), we described the intrinsic Hamiltonian  approach to description of motion of point vortices on non-orientable manifolds. 

Since non-orientable manifolds cannot have a symplectic form, the Hamiltonian approach requires passing from the non-orientable manifold $M$ to its orientable double cover $\widetilde{M}$; the phase space for the system consisting of $N$ point vortices will then be $\underbrace{\widetilde{M}\times\ldots\times\widetilde{M}}_{N\  \mathrm{times}}\backslash\widetilde{\Delta}$ with $\widetilde{\Delta}$ the very big diagonal excluding not only n-tuples with repeated points, but also those that have any two points from $\widetilde{M}$ projecting to the same one on $M$.

Perceiving strengths of point vortices as twisted scalars and lifting the system on the non-orientable manifold to the one on its double cover allows us to write the symplectic form on the phase space and the Hamiltonian $\mathcal{H}$. In fact, the latter admits a formulation as a function on  $M\times\ldots\times M$, allowing us to write local equations of motion. The intrinsic form of $\mathcal{H}$ on $M\times\ldots\times M$ is 
\begin{equation} \label{eq: Ham of the double cover}
   \mathcal{H}(x_1,\ldots,x_N) =  -\sum_{i< j}\Gamma_i\Gamma_j\,G_M(x_i,x_j)
   	- \tfrac12 \sum_j\Gamma_j^2R_M(x_j),
\end{equation}
where 
\begin{equation}
\label{eq: twisted Green}
G_M(x,y) = \Gt(x',y') -\Gt(x',\tau(y'))
\end{equation} is the twisted Green's function,
\begin{equation}
\label{eq: twisted Robin}
     R_M(x) = \Rt(x')-\Gt(x',\tau(x'))
\end{equation}
the regular Robin function and $x',\ y'$ the preferred lifts of $x$ and $y$ to $\widetilde{M}$ . 

The momentum map with respect to  the symmetry group $G$  with a Lie algebra $\mathfrak{g}$, originally defined on $\widetilde{M}\times\ldots\widetilde{M}\backslash\widetilde{\Delta}$ as 
$$\Phi(z_1,\dots,z_N) = \sum_j\Gamma_j\,\psi(z_j),$$
with $\psi:\widetilde{M}\to\mathfrak{g}^{\ast}$ the momentum map on one copy of $\widetilde{M}$, also descends to a regular function on $M\times\ldots\times M$. 

Despite the absence of a globally defined Hamiltonian vector field, we may write the equations on each of the charts as the system moves; the trajectories can be glued together making the motion global.  

Recall that a solitary point vortex on an orientable manifold $\widetilde{M}$ of strength $\Gamma$  and with centre at $x_0\in\widetilde{M}$ creates a flow with vorticity $\omega = \Gamma\delta_{x_0}$, where $\delta$ is the Dirac delta-function. 

When $\widetilde{M}$ is compact, this formula has to be amended, so that $\iint\limits_M \omega = 0$ (\cite{dritschel2015motion}). This is achieved through subtracting the inverse of the area $A$ of $\widetilde{M}$: therefore, in the compact case the  vorticity will be $\omega = \Gamma(\delta_{x_0} - 1/A)$. 

Now suppose that the compact manifold $\widetilde{M}$ is a double cover of some compact non-orientable manifold $M$ (e.g. a torus and the Klein bottle).

 Keeping the notation in line with Part I, let $x', \ x'' \in\widetilde{M}$ denote the two preimages of $x\in M$ (if $\tau$ is the antisymplectic involution of $\widetilde{M}$, $\tau(x') = x''$). Then a point vortex of strength $\Gamma$ at the point  $x \in M$ (in the context of the non-orientable manifold, $\Gamma$ is a twisted scalar) is covered by two point vortices of strengths $\Gamma$ and $-\Gamma$, which we consider to be placed respectively at $x' $ and $x''$ (for details on the covering systems and lifts, see Part I, Sections 2.3,2.4).
 
 Then the total vorticity of the fluid on $\widetilde{M}$ will be $\Gamma\delta_{x'} - \Gamma\delta_{x''}$, the integral of which over $\widetilde{M}$ can be easily seen to be 0.

\subsubsection*{The Hamiltonian}
The standard model of the M\"obius band employed in Part I was a strip in the plane with oppositely oriented sides and a cylinder as a double cover. In this instance, there is only one way of constructing the double cover, as well as of simplifying the Hamiltonian from the infinite sum form (see Section 3.1 of Part I and  Section 2 of \cite{montaldi2003vortex} for the constructions). 

As we have remarked above, the Klein bottle is a compact manifold and our go-to model is the $\pi$-by-$\pi$ square in the plane, as in the left part of Figure $\ref{fig:model of the Klein bottle}$. The double cover will always be a torus, but it is straightforward that two ways of constructing it exist: as depicted in Figure \ref{fig:model of the Klein bottle} and in the Figure \ref{fig:model of the Klein bottle alt}. The Hamiltonian for the covering system can be easily seen to have the form of an  infinite double sum.

Below (Section \ref{sec: Ham Klein}), we demonstrate that out of the two natural methods of summing up the Hamiltonian, each one is compatible with only one periodization; changing either of the ingredients leads to a not well-defined function. Namely, the Hamiltonian loses its invariance under $\mu$, the orientation changing involution of the double cover. We establish the form of the Hamiltonian compatible with our natural periodization and demonstrate that it conforms to all the conditions, i.e.\ is a well-defined (regular) function on the Klein bottle.

\subsection*{Invariants and motion of a small number of point vortices}

For computational purposes, we assume that the square that is our model lies in the $x-y$ plane, with $-\frac{\pi}{2}\le x, y\le \frac{\pi}{2}$. In Section \ref{sec: motion one vort} we  describe the motion of a single point vortex, establishing that it moves in a straight line with a constant velocity, unless placed on the lines $y = 0,\pm\frac{\pi}{4},\pm\frac{\pi}{2}$, in which case it remains stationary. 

Systems on the Klein bottle will be $S^1$-symmetric, in the same way as the ones on the M\"obius band; the form of the invariants of motion will coincide as well:
\[
C = \sum_{i=1}^N\Gamma_iy_i
\]
for the system of point vortices with strengths $\Gamma_k$ placed respectively at $(x_k,y_k)$. 

However, compactness of the Klein bottle implies that this is a local invariant of motion, rather than global. Restoring the motion of two point vortices in the spirit of Part I requires us to transition to the covering space in a different way: alternations needed to be done to the method are  the topic of discussion in  Section \ref{sec: two point vortices}.

For the $\pi$-by-$\pi$ square model of the Klein bottle, the cover in question will be a cylinder of radius $2\pi$; on it, we follow the motion of a certain pair of vortices, not caring whether they leave our copy of the bottle. This approach enables us to treat $C$ as a global rather than a local invariant and transfer to the reduced Hamiltonian  while losing no information about the motion of the system. 

We provide numerical evidence of the reduced Hamiltonian having critical points only on the line $x_1-x_2 = 0,\pm\pi$ (a rigorous proof seems to present an insurmountable challenge)
and describe all the possible singularities occurring from collisions between the vortices themselves or their covering copies. Armed with that, we can restore the trajectories of original motion from the reduced system and restore the trajectories.





\section{The Hamiltonian and equations of motion}
\label{sec: the Ham and motion equations}
 We adopt different methods of periodization to figure out the form of the Hamiltonian on the Klein bottle. Our go-to model is the $\pi$-by-$\pi$ square that has a $2\pi$-by-$\pi$ torus as its double cover (Figure \ref{fig:model of the Klein bottle}). In this context, we refer to the oppositely oriented  sides as the \defn{vertical imaginary boundary}  and to the two sides with the same orientation as the  \defn{horizontal imaginary boundary}.

\subsection{Jacobi Theta functions}

Prior to starting the calculations, we need to introduce the functions we will be employing. For details on the topic past the definition and some initial properties, see, for example, \cite{whittaker2020course, bellman2013brief}.

\begin{Def}
\label{def: theta functions}
Jacobi theta functions are the quasi-doubly periodic functions of two complex arguments $z$ and $q$, given by the formulae:
\begin{equation*}
    \begin{split}
        &\theta_1(z,q):=\sum\limits_{n=-\infty}^{+\infty}(-1)^{n-\frac{1}{2}}q^{\left(n+\frac{1}{2}\right)^2}e^{(2n+1)iz} = 2\sum\limits_{n=0}^{+\infty}(-1)^{n}q^{\left(n+\frac{1}{2}\right)^2}\sin((2n+1)z)\\
        &\theta_2(z,q):=\sum\limits_{n=-\infty}^{+\infty}q^{\left(n+\frac{1}{2}\right)^2}e^{(2n+1)iz} = 2\sum\limits_{n=0}^{+\infty}q^{\left(n+\frac{1}{2}\right)^2}\cos((2n+1)z)\\
        &\theta_4(z,q):=\sum\limits_{n=-\infty}^{+\infty}(-1)^{n}q^{n^2}e^{2niz} = 2\sum\limits_{n=0}^{+\infty}(-1)^{n}q^{n^2}\cos(2nz)
    \end{split}
\end{equation*}
The functions can also be expressed in  terms of the number $\tau$, such that  $q = e^{i\pi\tau}$. When $q$ (and, consequently, $\tau$)  is fixed, we will refer to theta functions as $\theta_i(z)$. 
\end{Def}
 Let $\theta'(z,q)$ be the derivative of $\theta(z,q)$ with respect to $z$. It is easy to see that the following relations hold: 
\begin{align}
    \label{eq: Jac th element props}
     &\theta_{1}(-z,q) = -\theta_{1}(z,q),\ &\theta_{2}(-z,q) = \theta_{2}(z,q), \\
    \nonumber &\theta'_{1}(-z,q) = \theta'_{1}(z,q),\ &\theta'_{2}(-z,q) = -\theta'_{2}(z,q),\\
  \nonumber  & \theta_{1,2}(\bar{z},\bar{q}) = \overline{\theta_{1,2}(z,q)},
     \end{align}
    
     as well as some periodic ones (\cite{du1973elliptic}):
     \begin{equation}
     \label{eq: more properties of thetas}
\begin{split}
 &\theta_2(z) = \theta_1(z + \frac{\pi}{2}), \ \theta_1(z+\pi) = -\theta_1(z),\
         \theta_1'(z + \pi) = -\theta_1'(z),\
         \theta_1''(z + \pi) = -\theta_1''(z),\ \\
&\frac{\theta'_1(z + \pi)}{\theta_1(z + \pi)} = \frac{\theta'_1(z)}{\theta_1(z)},\
          \frac{\theta'_1(z + \pi\tau)}{\theta_1(z + \pi\tau)} = -2i +\frac{\theta'_1(z)}{\theta_1(z)}, \theta_1(z + \pi\tau)= -\frac{e^{-2 i z}}{q}\theta_1(z).
\end{split}
\end{equation}
Armed with this, we proceed with the explicit computation for the Hamiltonian.

\begin{figure}
    \centering
    \begin{tikzpicture}[scale=0.3]
    \draw[reverse directed] (-5,-4) -- (-5,8);
    \draw[directed] (-5,8) -- (7,8);
    \draw[directed] (7,8) -- (19,8);
     \draw[directed] (-5,-4) -- (7,-4);
      \draw[directed] (7,-4) -- (19,-4);
     \draw[ directed] (7,-4) -- (7,8);
      \draw[dashed] (-5,2) -- (19,2);
       \draw[ reverse directed] (19,-4) -- (19,8);
       \draw  (-2,0) circle (1.5);
\draw[decorate,decoration={ markings,mark=at position -3 cm with
{\arrow[line width= 0.3mm]{>}};}]{ (-2,0) circle (1.5)};
\draw[fill] (-2,0 ) circle (0.05);
\draw  (10,4) circle (1.5);
\draw[decorate,decoration={ markings,mark=at position -3 cm with
{\arrow[line width= 0.3mm]{<}};}]{ (10,4) circle (1.5)};
\draw[fill] (10,4 ) circle (0.05);
\end{tikzpicture}
    \caption{A $2\pi$-by-$\pi$ torus as a double  cover of a $\pi$-by-$\pi$ Klein bottle. The dashed line is $y = 0$}
     \label{fig:model of the Klein bottle}
\end{figure}
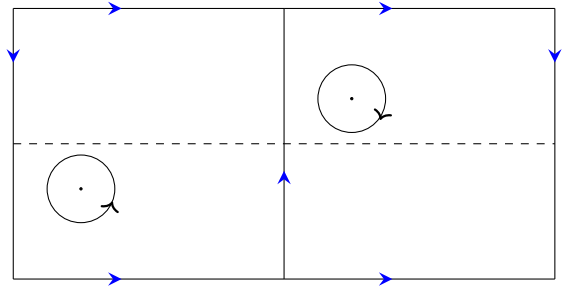

\subsection{The Hamiltonian for the torus}

It is straightforward that the Hamiltonian for the Klein bottle can be obtained from the one for the torus through imposing certain symmetries on the system; \cite{o1989hamiltonian} gives the explicit form of the Hamiltonian for the $\pi$-by-$\tau\pi$ torus: 
 \begin{equation}
\label{eq: Ham for torus}
\mathcal{H}_T =  -\frac{1}{2\pi}\sum_{k<l}\Gamma_k\Gamma_l\left(\log\left|\theta_1\left(z'_k-z'_l, e^{i\pi\tau}\right)\right| - \frac{\left(\mathrm{Im}(z'_k-z'_l)\right)^2}{\pi\mathrm{Im}\tau}\right).
\end{equation}
The choice of the function is justified by the location of its poles as well as its periodicities. However, in the scope of this work, we aim to provide an explicit calculation. 

Consider a lattice comprised of $\pi$-by-$2\pi$ rectangles (as in Figure \ref{fig: periodization}) and an appropriately periodized system of point vortices. Then the Hamiltonian is given by an infinite (divergent) double sum:
\begin{equation}
\label{eq: Hamiltonian tirus double sum}
\mathcal{H} = -\frac{1}{2\pi}\sum_{k<l}\sum_{m,n}\Gamma_k\Gamma_l\log\left|z_k-z_l +\pi i n + 2\pi m \right|
\end{equation}
There are two natural ways of computing  $\sum\limits_{m,n}\log\left|z_k-z_l +\pi i n + 2\pi m \right|$:  summing up horizontally and then vertically and the other way around. Physical reasons dictate that the two answers coincide (up to addition of a constant). However, as we will see below, the form of the two is  drastically different.

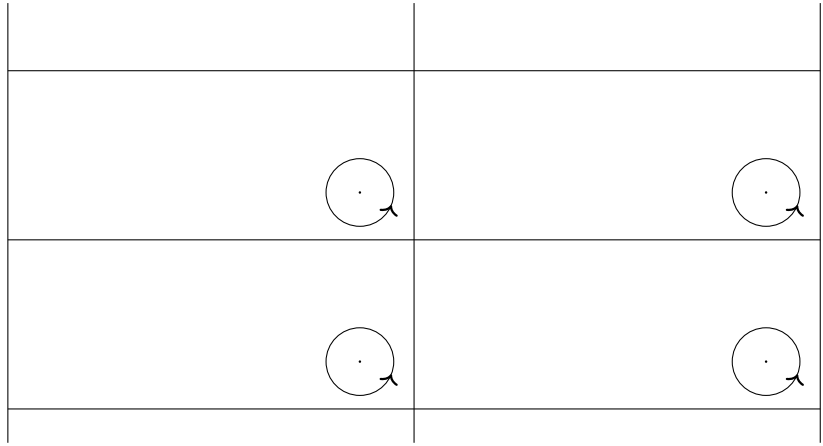
\begin{figure}
\centering
 \begin{tikzpicture}[scale = 0.9]
    \draw[] (-5,-1.5) -- (-5,5);
         \draw[] (1,-1.5) -- (1,5);
     \draw[] (7,-1.5) -- (7,5);
      \draw[] (-5,4) -- (7,4);
           \draw[] (-5,1.5) -- (7,1.5);
             \draw[] (-5,-1) -- (7,-1);
\draw  (0.2,-0.3) circle (0.5);
\draw[decorate,decoration={ markings,mark=at position -3 cm with
{\arrow[line width= 0.3mm]{>}};}]{ (0.2,-0.3) circle (0.5)};
\draw  (0.2,2.2 ) circle (0.5);
\draw[decorate,decoration={ markings,mark=at position -3 cm with
{\arrow[line width= 0.3mm]{>}};}]{ (0.2,2.2 ) circle (0.5)};
\draw  (6.2,-0.3 ) circle (0.5);
\draw[decorate,decoration={ markings,mark=at position -3 cm with
{\arrow[line width= 0.3mm]{>}};}]{ (6.2,-0.3 ) circle (0.5)};
\draw  (6.2,2.2) circle (0.5);
\draw[decorate,decoration={ markings,mark=at position -3 cm with
{\arrow[line width= 0.3mm]{>}};}]{ (6.2,2.2) circle (0.5)};
\draw[fill] (0.2,-0.3 ) circle (0.01);
\draw[fill] (0.2,2.2 ) circle (0.01);
\draw[fill] (6.2,-0.3 ) circle (0.01);
\draw[fill] (6.2,2.2 ) circle (0.01);
\end{tikzpicture}
   \caption{Double periodization for a single vortex on the torus}
    \label{fig: periodization}
\end{figure}

We give detailed computations for summing up horizontally then vertically. To force our infinite sum to converge, we employ the same trick as \cite{montaldi2003vortex}: subtracting an infinitely large constant number from our Hamiltonian. This transition will be denoted by $\rightarrow$ in the calculations below. 
\begin{small}
\label{eq: long calculation}
\begin{equation*}
    \begin{split}
      \sum_{m,n}\log\left|z_k-z_l +\pi i n + 2\pi m \right|  =& \log\left|\prod_{ m>0,n}\left((z_k-z_l +\pi i n)^2 - 4\pi^2 m^2\right) \ast\prod_n\left(z_k-z_l +\pi i n\right)\right|\xrightarrow[]{A} \\
      \xrightarrow[]{A}&\log\left|\prod_n\sin\left(\frac{z_k-z_l + \pi n i}{2}\right)\right|= \\=&
      \log\left|\prod_n\left(\sin\left(\frac{z_k-z_l }{2}\right)\cosh\left(\frac{\pi n }{2}\right) +i\cos\left(\frac{z_k-z_l }{2}\right)\sinh\left(\frac{\pi n }{2}\right)\right)\right| = \\=&
    \log\left|\prod_{n>0}\left(\sin^2\left(\frac{z_k-z_l }{2}\right)\cosh^2\left(\frac{\pi n }{2}\right) +\cos^2\left(\frac{z_k-z_l }{2}\right)\sinh^2\left(\frac{\pi n i}{2}\right)\right)\right| + \\ +& \log\left|\sin\left(\frac{z_k-z_l}{2}\right)\right| =\\ =&\log\left|\prod_{n>0}\left(\cos^2\left(\frac{z_k-z_l}{2}\right) - \cosh^2\left(\frac{\pi n}{2}\right)\right)\right| +\log\left|\sin\left(\frac{z_k-z_l}{2}\right)\right|\xrightarrow[]{B}\\\xrightarrow[]{B}&\log\left|\theta_1\left(\frac{z_k-z_l}{2},e^{-\frac{\pi}{2}}\right)\right|.
    \end{split}
\end{equation*}
\end{small}
Here we used that $\sin(ix) = i\sinh(x), \ \cos(ix)  = \cosh(x)$ for $x\in \mathbb{R}$, as well as the representations of theta functions through infinite products from  \cite{bonn}. In the transition $\xrightarrow[]{A}$ we subtract  $\sum\limits_{m>0}\log|4\pi^2m^2|$ from our sum, and in $\xrightarrow[]{B}$ we subtract $\sum\limits_{n>0}\log\left|\cosh^2\left(\frac{\pi n }{2} \right)\right|$.

The function  $\log\left|\theta_1\left(\frac{z_k-z_l}{2},e^{-\frac{\pi}{2}}\right)\right|$ is $2\pi$-periodic in real parts of $z_j$; however, it is quasi $\pi$-periodic in imaginary and hence, not well-defined on the torus. The initial periodicity of (\ref{eq: Hamiltonian tirus double sum}) was lost when we `    folded' the sum, assuming that $m$ and $n$ are greater than 0. 

In order to make the last expression in (\ref{eq: long calculation}) periodic, we need to subtract $\frac{\left(\mathrm{Im}\left(z_k-z_l\right)\right)^2}{2\pi}$ (the periodicity of the resulting function can be checked from the last relation on $\theta_1$ in (\ref{eq: more properties of thetas})). Therefore, the Hamiltonian on the torus obtained from this periodizing is given by 
\begin{equation}
    \label{eq: Ham torus horizontal periodizing}
   \mathcal{H}^T_1 =  -\frac{1}{2\pi}\sum_{k<l}\Gamma_k\Gamma_l\left(\log\left|\theta_1\left(\frac{z_k-z_l}{2},e^{-\frac{\pi}{2}}\right)\right| - \frac{\left(\mathrm{Im}\left(z_k-z_l\right)\right)^2}{2\pi}\right). 
\end{equation}
\begin{rem}
 Observe that in (\ref{eq: Ham torus horizontal periodizing}) $\log\left|\theta_1\left(\frac{z_k-z_l}{2},e^{-\frac{\pi}{2}}\right)\right|$ is the part of the Green function obtained through the method of images, while $\frac{\left(\mathrm{Im}\left(z_k-z_l\right)\right)^2}{4\pi^2}$ after the application of $\Delta$ yields $1/2\pi^2$, which is the inverse of the surface area of the torus, in accordance with the remark on the strength of point vortices on closed orientable surfaces above.
\end{rem}
Now, we sum (\ref{eq: Hamiltonian tirus double sum}) vertically then horizontally, to obtain:
\begin{small}
\begin{equation*}
    \begin{split}
      \sum_{m,n}\log\left|z_k-z_l +\pi i n + 2\pi m \right|  =& \log\left|\prod_{n>0, m}\left((z_k-z_l +2 \pi m)^2 + \pi^2 n^2\right) \ast\prod_m\left(z_k-z_l +2\pi m\right)\right| \xrightarrow[]{A'} \\
      \xrightarrow[]{A'}&\log\left|\prod_n\sinh\left(z_k-z_l + 2\pi m\right)\right|= \\=&
      \log\left|\prod_n\left(\sinh\left(z_k-z_l\right)\cosh\left(2\pi m\right) +\cosh\left(z_k-z_l\right)\sinh\left(2\pi m\right)\right)\right| \xrightarrow[]{B'}\\\xrightarrow[]{B'}&\log\left|\theta_1\left(i\left(z_k-z_l\right),e^{-2\pi}\right)\right|.
    \end{split}
\end{equation*}
\end{small}

Transition $\xrightarrow[]{A'}$ is subtraction of $\sum\limits_{n>0}\log\left|\pi^2n^2\right|$ and $\xrightarrow[]{B'}$  of $\sum\limits_{m>0}\log\left|\sinh^2\left(2\pi m\right)\right|$.

This function is $\pi$-periodic in imaginary parts and $2\pi$-quasi periodic in real. Analogously, we remedy that through subtracting $\frac{\left(\mathrm{Re}(z_k-z_l)\right)^2}{2\pi}$, to obtain
\begin{equation}
    \label{eq: Ham torus vertical}
    \mathcal{H}^T_2=  -\frac{1}{2\pi}\sum_{k<l}\Gamma_k\Gamma_l\left(\log\left|\theta_1\left(i\left(z_k-z_l\right),e^{-2\pi}\right)\right| - \frac{\left(\mathrm{Re}(z_k-z_l)\right)^2}{2\pi}\right).
\end{equation}
\begin{rem}
 Observe how the change $z\mapsto iz$ transforms (\ref{eq: Ham torus vertical}) into (\ref{eq: Ham for torus}), as written for a $\pi$-by-$2\pi$ torus: indeed, multiplication by $i$ is the ninety degree rotation of the plane.
\end{rem}
We have remarked above that the functions $\mathcal{H}^T_1$ and $\mathcal{H}^T_2$ must differ by a constant; however, they look nothing like each other. Nonetheless, the following holds:

\begin{lem}
$\mathcal{H}_1 + \frac{\log(2)}{4\pi}\sum\limits_{k<l}\Gamma_k\Gamma_l  = \mathcal{H}_2$.
\end{lem}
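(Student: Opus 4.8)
\emph{Proof strategy.} Both $\mathcal{H}_1$ and $\mathcal{H}_2$ are sums over pairs $k<l$ of $-\tfrac{1}{2\pi}\Gamma_k\Gamma_l$ times a function of $w:=z_k-z_l$ alone, so it suffices to prove the pointwise identity
\[
\log\Bigl|\theta_1\bigl(\tfrac{w}{2},e^{-\pi/2}\bigr)\Bigr|-\frac{(\mathrm{Im}\,w)^2}{2\pi}
=\frac{\log 2}{2}+\log\Bigl|\theta_1\bigl(iw,e^{-2\pi}\bigr)\Bigr|-\frac{(\mathrm{Re}\,w)^2}{2\pi};
\]
multiplying by $-\tfrac{1}{2\pi}\Gamma_k\Gamma_l$ and summing over $k<l$ turns this into the asserted relation between $\mathcal{H}_1$ and $\mathcal{H}_2$.

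The engine is Jacobi's imaginary (modular) transformation for $\theta_1$. In the normalisation of Definition~\ref{def: theta functions} ($q=e^{i\pi\tau}$) it reads
\[
\theta_1\bigl(z,e^{i\pi\tau}\bigr)=-i(-i\tau)^{-1/2}\,e^{-iz^2/(\pi\tau)}\,\theta_1\Bigl(-\tfrac{z}{\tau},\,e^{-i\pi/\tau}\Bigr).
\]
The two nomes occurring in $\mathcal{H}_1,\mathcal{H}_2$ are $e^{-\pi/2}$ and $e^{-2\pi}$, i.e.\ $\tau_1=i/2$ and $\tau_2=2i$, and the crucial point is $\tau_2=-1/\tau_1$. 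Setting $\tau=\tau_1=i/2$ and $z=w/2$ one computes $-i\tau=\tfrac12$, $-z/\tau=iw$, $e^{-i\pi/\tau}=e^{-2\pi}$ and $-iz^2/(\pi\tau)=-w^2/(2\pi)$, so that
\[
\theta_1\bigl(\tfrac{w}{2},e^{-\pi/2}\bigr)=-i\sqrt{2}\;e^{-w^2/(2\pi)}\,\theta_1\bigl(iw,e^{-2\pi}\bigr).
\]
Taking moduli and logarithms — the branch of $(-i\tau_1)^{-1/2}$ is irrelevant, only $|(-i\tau_1)^{-1/2}|=|\tau_1|^{-1/2}=\sqrt2$ enters — and using $\bigl|e^{-w^2/(2\pi)}\bigr|=e^{-\mathrm{Re}(w^2)/(2\pi)}$ with $\mathrm{Re}(w^2)=(\mathrm{Re}\,w)^2-(\mathrm{Im}\,w)^2$, one gets
\[
\log\Bigl|\theta_1\bigl(\tfrac{w}{2},e^{-\pi/2}\bigr)\Bigr|=\frac{\log 2}{2}-\frac{(\mathrm{Re}\,w)^2-(\mathrm{Im}\,w)^2}{2\pi}+\log\Bigl|\theta_1\bigl(iw,e^{-2\pi}\bigr)\Bigr|.
\]
Subtracting $\tfrac{(\mathrm{Im}\,w)^2}{2\pi}$ from both sides, the two quadratic terms on the right collapse to $-\tfrac{(\mathrm{Re}\,w)^2}{2\pi}$ — precisely the correction carried in $\mathcal{H}_2$ — leaving the residual constant $\tfrac{\log 2}{2}$, which is the per-pair identity above. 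The lemma follows.

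The one genuinely delicate step is pinning down the exact prefactor and exponent in Jacobi's transformation in this theta convention, since references differ by signs and by whether $z$ carries a factor of $\pi$; I would verify it directly from the infinite-product form $\theta_1(z,q)=2q^{1/4}\sin z\prod_{n\ge1}(1-q^{2n})(1-q^{2n}e^{2iz})(1-q^{2n}e^{-2iz})$ via Poisson summation. It is, however, essentially harmless here: only $|\theta_1|$ is needed, so the square-root branch drops out, and the answer can be cross-checked against the already-noted fact that $\mathcal{H}_1-\mathcal{H}_2$ is a constant multiple of $\sum_{k<l}\Gamma_k\Gamma_l$ (the logarithmic singularities of the two theta terms cancel, leaving a harmonic, hence constant, function on the compact torus). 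That constant is then its $w\to0$ limit, where the quadratic terms vanish and $\theta_1(\tfrac{w}{2},\cdot)/\theta_1(iw,\cdot)\to \theta_1'(0,e^{-\pi/2})/\bigl(2i\,\theta_1'(0,e^{-2\pi})\bigr)$; the required relation $|\theta_1'(0,e^{-\pi/2})|=2\sqrt2\,|\theta_1'(0,e^{-2\pi})|$ is exactly the $z=0$ content of the same transformation (equivalently, the case $\tau=i/2$ of $\eta(-1/\tau)=\sqrt{-i\tau}\,\eta(\tau)$), which I would quote as an independent sanity check.
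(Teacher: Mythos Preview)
Your proof is correct and is essentially the same as the paper's: both reduce to a per-pair identity and derive it from Jacobi's imaginary (modular) transformation for $\theta_1$ --- the paper quotes it as $\tfrac{1}{\sqrt\lambda}e^{z^2/(\pi\lambda)}\theta_1(\lambda^{-1}z,e^{-\pi/\lambda})=-i\theta_1(iz,e^{-\pi\lambda})$ with $\lambda=2$, which is exactly your formula with $\tau=i/2$. Your additional sanity check via the $w\to0$ limit and the $\eta$-function is not in the paper but is a nice cross-verification.
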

\begin{proof}
We employ the equality from \cite{bonn}:
\begin{equation}
\label{eq: connection with i}
\frac{1}{\sqrt{\lambda}}\,e^{z^2/(\pi\lambda)}\,\theta_1\left(\lambda^{-1}z, e^{-{\pi}/{\lambda}}\right)= -i\theta_1\left(iz, e^{-\pi\lambda}\right)
\end{equation}
in order to write for each $z_k$- $z_l$ pair:
\begin{small}
\begin{equation*}
    \begin{split}
     & \log\left|\theta_1\left(\frac{z_k-z_l}{2},e^{-\frac{\pi}{2}}\right)\right|   - \frac{(\mathrm{Im}(z_k-z_l))^2}{2\pi} =\\&=\log\left|\theta_1\left(i(z_k-z_l),e^{-2\pi}\right)\right|   - \frac{(\mathrm{Im}(z_k-z_l))^2}{2\pi}- \log\left|\mathrm{exp}\left(\frac{(z_k-z_l)^2}{2\pi}\right)\right| + \log(\sqrt{2}) = \\&=\log\left|\theta_1\left(i(z_k-z_l),e^{-2\pi}\right)\right|   - \frac{(\mathrm{Im}(z_k-z_l))^2}{2\pi} - \frac{(\mathrm{Re}(z_k-z_l))^2}{2\pi} + \frac{(\mathrm{Im}(z_k-z_l))^2}{2\pi} + \log(\sqrt{2})=\\&=\log\left|\theta_1\left(i(z_k-z_l),e^{-2\pi}\right)\right|  - \frac{(\mathrm{Re}(z_k-z_l))^2}{2\pi} + \log(\sqrt{2}),
    \end{split}
\end{equation*}
\end{small}
whence the statement of the lemma follows immediately.
\end{proof}
\subsection{ The Hamiltonian for the Klein bottle}
\label{sec: Ham Klein}
The next step is to periodize the Hamiltonian for the torus in order to get one for the Klein bottle. Since $\tau$ in this context denotes the parameter in Jacobi functions $\theta_j$, we use $\mu$ for the antisymplectic involution  on the double cover.

Our rectangle covers two copies of the Klein bottle, as drawn in the Figure \ref{fig:model of the Klein bottle}. If we suppose that the bottom left corner of the rectangle is placed at the point with coordinates $(-\pi/2, -\pi/2)$, the involution will be explicitly given by $\mu:
z\mapsto\bar{z} + \pi$.

We periodize both forms of the Hamiltonian to see if two functions that are seemingly so different will give the same results.  Consider first $\mathcal{H}^T_2$.

Observing that $\left|\theta_1(z,q)\right| = \left|\theta_1(\bar{z},q)\right|$ when $q\in\mathbb{R}$ allows us to simplify it to the expression (we also subtract a constant and divide the final expression by 2, as per the discussion  in Section 2.4 of  Part I ): 
\begin{equation}
    \begin{split}
    \label{eq: Ham the wromg one}
        \mathcal{H}_0 =& -\frac{1}{2\pi}\sum_{k<l}\Gamma_k\Gamma_l\log\left|\theta_1\left(i(z_k-z_l), e^{-2\pi}\right)\right| + \frac{1}{4\pi}\sum_{k\ne l}\Gamma_k\Gamma_l\log\left|\theta_1\left(i(z_k-\bar{z}_l- \pi), e^{-2\pi}\right)\right| + \\&+\frac{1}{4\pi}\sum_k\Gamma_k^2\log\left|\theta_1\left(2y_k,e^{-2\pi}\right)\right|
    \end{split}
\end{equation}

The function (\ref{eq: Ham the wromg one}) has the same periodicity properties as its counterpart on the torus; however, is it invariant under $\mu$?
\begin{lem}
$\mathcal{H}_0(\mu(z_1), z_2,\ldots,z_n) = \mathcal{H}_0(z_1,\ldots,z_n) - \frac12\sum\limits_{k\ne 1}\Gamma_1\Gamma_k$.
\end{lem}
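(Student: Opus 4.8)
The plan is to compute $\mathcal{H}_0(\mu(z_1),z_2,\ldots,z_n)$ directly by substituting $z_1 \mapsto \bar z_1 + \pi$ into each of the three sums in (\ref{eq: Ham the wromg one}) and tracking which terms fail to return to their original value. Write $w = z_k - z_l$ for a generic difference. The key algebraic facts I would use are: $|\theta_1(z,q)| = |\theta_1(\bar z, q)|$ for $q\in\mathbb{R}$ (already invoked in the derivation of $\mathcal{H}_0$), the $\pi$-quasiperiodicity relation $\theta_1(z+\pi\tau) = -e^{-2iz}q^{-1}\theta_1(z)$ from (\ref{eq: more properties of thetas}), and the elementary $\theta_1(z+\pi) = -\theta_1(z)$. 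Here the relevant nome is $e^{-2\pi}$, i.e. $\tau = 2i$, so $\pi\tau = 2\pi i$, and shifting the argument $i(z_k - z_l)$ by replacing $z_1$ with $\bar z_1 + \pi$ produces exactly an argument shift by $i\pi = \tfrac{\pi\tau}{2}$ worth of real translation plus a conjugation — which is the subtle point and why a $\tfrac{\pi\tau}{2}$-type half-period shows up rather than a clean periodicity.

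Concretely, I would proceed term by term. In the first sum $-\tfrac1{2\pi}\sum_{k<l}\Gamma_k\Gamma_l\log|\theta_1(i(z_k-z_l),e^{-2\pi})|$, the only affected terms are those involving index $1$; there $z_1 - z_l \mapsto \bar z_1 + \pi - z_l$, so the argument $i(z_1 - z_l)$ becomes $i(\bar z_1 - z_l) + i\pi$. Using $|\theta_1(\bar\zeta)| = |\theta_1(\zeta)|$ (valid since the nome is real) converts $i(\bar z_1 - z_l)$ into something whose modulus matches $i(z_1 - \bar z_l)$ up to sign, and then the $+i\pi$ shift, via the quasiperiodicity formula, contributes a factor whose modulus is $e^{2\,\mathrm{Im}(\cdot)}$ — precisely the exponential correction that will not cancel. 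Meanwhile the middle sum $+\tfrac1{4\pi}\sum_{k\ne l}\Gamma_k\Gamma_l\log|\theta_1(i(z_k - \bar z_l - \pi),e^{-2\pi})|$ has terms with $k=1$ and terms with $l=1$: under $z_1 \mapsto \bar z_1 + \pi$ these get sent, respectively, to arguments involving $\bar z_1$ (so conjugation brings them back, combined with the first-sum terms) and to $i(z_k - z_1 - 2\pi)$, where the $2\pi$ is a genuine period and disappears. The self-interaction term $\tfrac1{4\pi}\sum_k\Gamma_k^2\log|\theta_1(2y_k,e^{-2\pi})|$ involves $y_k = \mathrm{Im}(z_k)$; since $\mathrm{Im}(\bar z_1 + \pi) = -\mathrm{Im}(z_1) = -y_1$ and $\theta_1$ is odd, $|\theta_1(-2y_1)| = |\theta_1(2y_1)|$, so this term is genuinely invariant and contributes nothing.

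The bookkeeping that produces the stated answer is the cross-check between the first sum and the $l=1$ (or $k=1$) half of the middle sum: after applying conjugation-invariance the $\log|\theta_1|$ pieces should recombine into exactly the original first sum plus the original middle sum, leaving behind only the accumulated exponential/quasiperiodicity factors. Each such factor has modulus $e^{\pm 2\,\mathrm{Im}(\,\cdot\,)}$, and when one sums the logarithms of these over $k \ne 1$ with coefficients $\Gamma_1\Gamma_k$ and the correct $\tfrac1{2\pi}$ or $\tfrac1{4\pi}$ prefactors, all the $y$-dependence must telescope away (this is forced: $\mathcal{H}_0$ evaluated at $\mu(z_1)$ is still a well-defined function, and the defect is a constant in the phase-space variables), leaving the pure constant $-\tfrac12\sum_{k\ne 1}\Gamma_1\Gamma_k$. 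I expect the main obstacle to be precisely this cancellation of the $\mathrm{Im}$-dependent pieces: one has to be careful that the half-period shift $i\pi = \tfrac{\pi\tau}{2}$ (not a full period $\pi\tau = 2\pi i$) is what appears, apply the quasiperiodicity relation once per affected term with the right sign of the exponent, and verify the factor $-\tfrac12$ comes out — the $\log 2$ from the previous lemma does \emph{not} enter here because we are comparing $\mathcal{H}_0$ with itself, not $\mathcal{H}_1$ with $\mathcal{H}_2$. A clean way to organize this is to first reduce everything to the combination $\log|\theta_1(i(z_k-z_l))| - \tfrac{(\mathrm{Im}(z_k-z_l))^2}{2\pi}$ appearing in $\mathcal{H}^T_2$, whose exact transformation under argument shifts by $i\pi$ and under conjugation is already packaged by (\ref{eq: more properties of thetas}), and then read off the constant.
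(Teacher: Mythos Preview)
Your plan has two concrete gaps that would prevent it from going through as written.

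First, you substitute $z_1 \mapsto \bar z_1 + \pi$ but never flip the sign of $\Gamma_1$. The involution $\mu$ acts on the covering system by sending a vortex to its opposite-strength copy, so $\Gamma_1 \mapsto -\Gamma_1$ is part of applying $\mu$ (the paper invokes this explicitly in its proof). Without that sign flip the index-$1$ terms of the first sum keep coefficient $-\tfrac{1}{2\pi}\Gamma_1\Gamma_l$ and cannot be matched against the middle sum's $+\tfrac{1}{4\pi}\Gamma_1\Gamma_l$; the swap between the two sums that you are counting on simply does not occur, and the leftover pieces are not constant in the $z_j$.

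Second, when you treat the $l=1$ half of the middle sum you write the new argument as $i(z_k - z_1 - 2\pi)$ and assert that ``the $2\pi$ is a genuine period and disappears''. It does not: the argument of $\theta_1(\,\cdot\,,e^{-2\pi})$ has shifted by $-2\pi i$, and with nome $e^{-2\pi}$ (so $\tau=2i$) this is exactly $\pi\tau$, the \emph{quasi}-period, not the true period $\pi$. This term is in fact a source of the exponential correction, not an invariant one. Relatedly, your worry about the half-period $i\pi$ in the first sum is a red herring: once you apply conjugation and oddness as you propose, the resulting argument differs from the original middle-sum argument $i(z_1-\bar z_l)-i\pi$ by the full $2\pi i$, and the standard relation $\theta_1(z+2\pi i)/\theta_1(z)=-e^{2\pi-2iz}$ applies cleanly. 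The paper's computation is organized entirely around this full-period identity; no half-period formula (which would bring in $\theta_4$) is ever needed, and the constant $-\tfrac12\sum_{k\ne1}\Gamma_1\Gamma_k$ drops out directly from $\log\lvert e^{2\pi+2(z_1-\bar z_k-\pi)}\rvert-\log\lvert e^{2\pi+2(z_1-z_k)}\rvert$ rather than from any telescoping argument.
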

\begin{proof}
From \cite{whittaker2020course}, we know that the following relation holds for $z\in\mathbb{C}$:
\[
\frac{\theta_1(z + 2i\pi, e^{-2\pi})}{\theta_1(z, e^{-2\pi})} = -e^{2\pi -2iz}.
\]
Considering that the involution changes the sign of $\Gamma_1$, a computation yields 
\begin{equation*}
    \begin{split}
        \mathcal{H}_0(\mu(z_1), z_2,\ldots,z_n) &= \mathcal{H}_0(z_1,\ldots,z_n) + \frac{1}{4\pi}\sum_{k\ne 1}\Gamma_1\Gamma_k\log\left|\mathrm{exp}\left(2\pi + 2(z_1 - \bar{z}_k - \pi)\right)\right| -\\&- \frac{1}{4\pi}\sum_{k\ne 1}\Gamma_1\Gamma_k\log\left|\mathrm{exp}\left(2\pi + 2(z_1-z_k)\right)\right|= \\&=\mathcal{H}_0(z_1,\ldots,z_n) - \frac{1}{4\pi}\sum_{k\ne 1}\Gamma_1\Gamma_k\log\left|\mathrm{exp}\left(2(\bar{z}_k-z_k) + 2\pi\right)\right| =\\&= \mathcal{H}_0(z_1,\ldots,z_n)- \frac{1}{2}\sum_{k\ne 1}\Gamma_1\Gamma_k
    \end{split}
\end{equation*}
\end{proof}

Therefore, this function is not well-defined on the square model of the Klein bottle: this form of the Hamiltonian on the torus is incompatible with this concrete periodization. 

However, a different method of periodization yields a well-defined Hamiltonian: suppose our double cover is as in Figure \ref{fig:model of the Klein bottle alt}, i.e. the Klein bottle is $2\pi$-by-$\pi/2$. Then $\mu':z\mapsto -\bar{z} + i\frac{\pi}{2}$. It can be checked that this periodization, as applied to $\mathcal{H}^T_2$, gives a Hamiltonian with proper periodicities and invariant under $\mu'$. 
\begin{figure}
    \centering
    \begin{tikzpicture}[scale=0.3]
    \draw[ directed] (-5,-4) -- (-5,8);
    \draw[directed] (-5,8) -- (19,8);
      \draw[directed] (-5,-4) -- (19,-4);
     \draw[ directed] (7,-4) -- (7,8);
      \draw[reverse directed] (-5,2) -- (19,2);
       \draw[directed] (19,-4) -- (19,8);
       \draw  (4,4) circle (1.5);
\draw[decorate,decoration={ markings,mark=at position -3 cm with
{\arrow[line width= 0.3mm]{>}};}]{ (4,4) circle (1.5)};
\draw[fill] (4,4 ) circle (0.05);
\draw  (10,-2) circle (1.5);
\draw[decorate,decoration={ markings,mark=at position -3 cm with
{\arrow[line width= 0.3mm]{<}};}]{ (10,-2) circle (1.5)};
\draw[fill] (10,-2 ) circle (0.05);
\end{tikzpicture}
    \caption{A $2\pi$-by-$\pi$ torus as a double  cover of a $2\pi$-by-$\pi/2$ Klein bottle}
     \label{fig:model of the Klein bottle alt}
\end{figure}
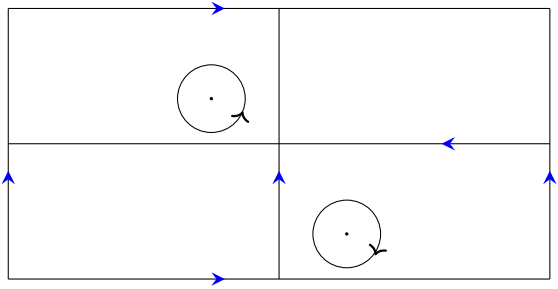

The square model is more convenient for  computational purposes, so we  obtain the final Hamiltonian  for the Klein bottle from periodizing (and again, dividing by 2) $\mathcal{H}^T_1$: 

\begin{small}
\begin{equation}
\begin{split}
    \label{eq: ONE TRUE KLEIN HAM}
    \mathcal{H} =& -\frac{1}{2\pi}\sum\limits_{\alpha<\beta}\Gamma_{\alpha}\Gamma_{\beta}\log\left|\theta_1\left(\frac{z_{\alpha}-z_{\beta}}{2},e^{-\frac{\pi}{2}}\right)\right| + \frac{1}{2\pi}\sum\limits_{\alpha<\beta}\Gamma_{\alpha}\Gamma_{\beta}\log\left|\theta_2\left(\frac{z_{\alpha}-\bar{z}_{\beta}}{2},e^{-\frac{\pi}{2}}\right)\right| + \\&+\frac{1}{2\pi}\sum\limits_{\alpha<\beta}\Gamma_{\alpha}\Gamma_{\beta}\left(\frac{(y_{\alpha} - y_{\beta})^2}{2\pi} - \frac{(y_{\alpha} + y_{\beta})^2}{2\pi}\right) + \frac{1}{4\pi}\sum\limits_{\alpha}\Gamma_{\alpha}^2\left(\log\left|\theta_1\left(iy_{\alpha} - \frac{\pi}{2},e^{-\frac{\pi}{2}}\right)\right| - \frac{2y_{\alpha}^2}{\pi}\right) = \\=&
    -\frac{1}{2\pi}\sum\limits_{\alpha<\beta}\Gamma_{\alpha}\Gamma_{\beta}\log\left|\theta_1\left(\frac{z_{\alpha}-z_{\beta}}{2},e^{-\frac{\pi}{2}}\right)\right| + \frac{1}{2\pi}\sum\limits_{\alpha<\beta}\Gamma_{\alpha}\Gamma_{\beta}\log\left|\theta_2\left(\frac{z_{\alpha}-\bar{z}_{\beta}}{2},e^{-\frac{\pi}{2}}\right)\right| -\\&-\frac{1}{\pi^2}\sum\limits_{\alpha<\beta}\Gamma_{\alpha}\Gamma_{\beta}y_{\alpha}y_{\beta} + \frac{1}{4\pi}\sum\limits_{\alpha}\Gamma_{\alpha}^2\left(\log\left|\theta_1\left(iy_{\alpha} - \frac{\pi}{2},e^{-\frac{\pi}{2}}\right)\right| - \frac{2y_{\alpha}^2}{\pi}\right).     \end{split}
\end{equation}
\end{small}

\begin{lem}
The Hamiltonian (\ref{eq: ONE TRUE KLEIN HAM}) is $\pi$-periodic vertically, $2\pi$-periodic horizontally and invariant under $\mu$. 
\end{lem}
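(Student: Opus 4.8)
The plan is to verify the three claimed properties of the Hamiltonian $\mathcal{H}$ in \eqref{eq: ONE TRUE KLEIN HAM} separately, working term-by-term and using the periodicity and involution relations already recorded in \eqref{eq: Jac th element props} and \eqref{eq: more properties of thetas}, together with the last relation on $\theta_1$ (the quasi-periodicity $\theta_1(z+\pi\tau) = -q^{-1}e^{-2iz}\theta_1(z)$), specialized to $q = e^{-\pi/2}$, i.e.\ $\tau = i/2$ and hence $\pi\tau = i\pi/2$.

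First I would establish vertical $\pi$-periodicity, i.e.\ invariance under $z_\alpha \mapsto z_\alpha + i\pi$ for each $\alpha$ (equivalently $y_\alpha \mapsto y_\alpha + \pi$). For the first sum, $\tfrac{z_\alpha - z_\beta}{2}$ shifts by $i\pi/2 = \pi\tau$, so $\theta_1$ picks up the factor $-q^{-1}e^{-i(z_\alpha-z_\beta)}$; taking $\log|\cdot|$ produces the correction $\tfrac{\pi}{2} + \mathrm{Im}(z_\alpha - z_\beta) = \tfrac{\pi}{2} + (y_\alpha - y_\beta)$ (up to sign and the $\tfrac{1}{2\pi}$ prefactor). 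For the $\theta_2$ term, $\tfrac{z_\alpha - \bar z_\beta}{2}$ shifts by $i\pi/2$ as well (since $\bar z_\beta$ is unaffected when we shift $z_\beta$ only by the real-to-imaginary bookkeeping — here one must be careful: shifting $z_\beta \mapsto z_\beta + i\pi$ sends $\bar z_\beta \mapsto \bar z_\beta - i\pi$), so one uses $\theta_2(w + \pi\tau)$, derivable from the $\theta_1$ relation via $\theta_2(z) = \theta_1(z + \tfrac{\pi}{2})$; this yields a matching correction. The point is that the quadratic terms $-\tfrac{1}{\pi^2}\sum y_\alpha y_\beta$ and $-\tfrac{1}{2\pi}\sum \tfrac{2y_\alpha^2}{\pi}$ shift by exactly the negatives of these logarithmic corrections, so everything cancels. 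This is the same mechanism by which \eqref{eq: Ham torus horizontal periodizing} was made periodic, now bookkept for the extra reflected terms.

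Next, horizontal $2\pi$-periodicity, i.e.\ invariance under $x_\alpha \mapsto x_\alpha + 2\pi$ (so $z_\alpha \mapsto z_\alpha + 2\pi$, $\bar z_\beta \mapsto \bar z_\beta + 2\pi$): then $\tfrac{z_\alpha - z_\beta}{2}$ and $\tfrac{z_\alpha - \bar z_\beta}{2}$ shift by $\pi$, and $\theta_1(z+\pi) = -\theta_1(z)$, $\theta_2(z+\pi) = -\theta_2(z)$ (the latter again from $\theta_2(z) = \theta_1(z+\tfrac\pi2)$ and $\theta_1(z+\pi)=-\theta_1(z)$) leave $\log|\cdot|$ unchanged, while the $y$-dependent terms are untouched; this case is immediate. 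Then I would turn to invariance under $\mu: z_\alpha \mapsto \bar z_\alpha + \pi$ applied to a single index, say $\alpha = 1$, remembering (as in the proof of the preceding lemma) that $\mu$ flips the sign of $\Gamma_1$. The key is to track how the three types of terms containing index $1$ transform: the $\theta_1\bigl(\tfrac{z_1 - z_\beta}{2}\bigr)$ terms become $\theta_1\bigl(\tfrac{\bar z_1 + \pi - z_\beta}{2}\bigr) = \theta_1\bigl(\tfrac{\bar z_1 - z_\beta}{2} + \tfrac\pi2\bigr) = \theta_2\bigl(\tfrac{\bar z_1 - z_\beta}{2}\bigr)$, whose modulus equals that of $\theta_2\bigl(\tfrac{z_1 - \bar z_\beta}{2}\bigr)$ since $q$ is real and conjugation can be applied inside — so the first sum's index-$1$ terms swap with the second sum's index-$1$ terms, and the sign flip of $\Gamma_1$ reconciles the signs. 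Simultaneously the quadratic cross-terms $-\tfrac{1}{\pi^2}\Gamma_1\Gamma_\beta y_1 y_\beta$ are invariant because $\mu$ sends $y_1 \mapsto -y_1$ and $\Gamma_1 \mapsto -\Gamma_1$. The self-term $\theta_1(iy_1 - \tfrac\pi2)$ must be checked to be invariant up to modulus under $y_1 \mapsto -y_1$ combined with $\Gamma_1^2$ being unchanged: here $\theta_1(-iy_1 - \tfrac\pi2) = -\theta_1(iy_1 + \tfrac\pi2)$ and $|\theta_1(iy_1 + \tfrac\pi2)| = |\theta_2(iy_1)| $ which one relates back using reality of $q$ and the relation $\theta_1(z+\tfrac\pi2)=\theta_2(z)$ — a short computation showing the self-term is fixed.

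The main obstacle I anticipate is the $\mu$-invariance bookkeeping: unlike the vertical-periodicity argument where the logarithmic corrections are visibly cancelled by the explicitly-inserted quadratic polynomial, here one must verify that the mixed term $\tfrac{1}{2\pi}\log|\theta_2(\tfrac{z_1-\bar z_\beta}{2})|$ and the ``diagonal'' term $\tfrac{1}{4\pi}\log|\theta_1(iy_1 - \tfrac\pi2)|$ conspire with the sign change of $\Gamma_1$ so that no leftover constant (of the type $-\tfrac12\sum\Gamma_1\Gamma_k$ that spoiled $\mathcal{H}_0$ in the previous lemma) survives — this is precisely the contrast with \eqref{eq: Ham the wromg one} that the section is built around, so the computation must come out exactly clean. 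I would organize it by writing $\mathcal{H}$ as (terms not involving index $1$) $+$ (terms involving index $1$), and checking that the latter block is literally invariant, using $|\theta_j(w, q)| = |\theta_j(\bar w, q)|$ for real $q$ repeatedly to move conjugations around; the absence of a residual constant is then the statement that this particular pairing of ``$\mathcal{H}^T_1$ + square-model periodization'' is the compatible one.
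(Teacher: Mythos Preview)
Your argument is correct, and for the $\mu$-invariance it is exactly the paper's argument: apply $\mu$ to a single index, observe that $\theta_1\bigl(\tfrac{z_i-z_\beta}{2}\bigr)\mapsto\theta_2\bigl(\tfrac{z_i-\bar z_\beta}{2}\bigr)$ (via $\theta_2(z)=\theta_1(z+\tfrac\pi2)$ and $|\theta_j(\bar w,q)|=|\theta_j(w,q)|$ for real $q$), so the first two sums trade places while the sign flip of $\Gamma_i$ reconciles the prefactors, and the $y_\alpha y_\beta$ and Robin terms are visibly even in $y_i$ after $\Gamma_i\mapsto-\Gamma_i$. Where you diverge from the paper is in the two periodicity statements: you re-derive them from scratch via the quasi-periodicity of $\theta_1$ and a cancellation against the quadratic $y$-terms, whereas the paper simply notes that $\mathcal{H}$ is obtained from the torus Hamiltonian $\mathcal{H}^T_1$ of \eqref{eq: Ham torus horizontal periodizing}---which was already engineered to be $2\pi$-periodic horizontally and $\pi$-periodic vertically---by symmetrizing under $\mu$, so both periodicities are inherited automatically. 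Your route works and makes the mechanism explicit; the paper's route is a one-line shortcut that avoids repeating the bookkeeping already done when the $\tfrac{(\mathrm{Im}(z_k-z_l))^2}{2\pi}$ correction was introduced.
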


\begin{proof}
The first two statements for our Hamiltonian follow from the corresponding properties of its predecessor on the torus (\ref{eq: Ham torus horizontal periodizing}). The last statement can be easily checked as well: under the change $x_i\mapsto x_i + \pi,\ y_i\mapsto-y_i,\ \Gamma_i\mapsto-\Gamma_i$ the first two summands exchange places, and the last two remain unchanged. 
\end{proof}
\begin{rem}
The Robin function $\log\left|\theta_1\left(iy - \frac{\pi}{2}, e^{-\frac{\pi}{2}}\right)\right| - \frac{2y^2}{\pi}$ does not look like a periodic function; however, the following holds:
\[
\log\left|\theta_1\left(iy - \frac{\pi}{2}, e^{-\frac{\pi}{2}}\right)\right| - \frac{2y^2}{\pi} = \log\left|\theta_4\left(2y,e^{-2\pi}\right)\right| + C.
\]
for constant $C$. 

Indeed, from (\ref{eq: connection with i}), we have
\begin{small}
\begin{equation*}
    \begin{split}
\log\left|\theta_1\left(iy-\frac{\pi}{2},e^{-\frac{\pi}{2}}\right)\right| - \frac{2y^2}{\pi} =&\log\left|\theta_1\left(2y +i\pi,e^{-2\pi} \right)\right| + \log\left|\mathrm{exp}\left(\frac{(2y + i\pi)^2}{2\pi}\right)\right| -\log(\sqrt{2}) - \frac{2y^2}{\pi}=\\=&\log\left|\theta_1\left(2y + i\pi,e^{-2\pi}\right)\right| + C',
\end{split} 
\end{equation*}
\end{small}
where $C'$ is a constant.

On the other hand, we know from \cite{wolfram} that 
\begin{equation*}
    \label{eq: quasiperiodicity}
\theta_1\left(z,q\right) = -ie^{iz + \pi i\tau/4} \ \theta_4\left(z + \frac{1}{2}\pi\tau,q\right),
\end{equation*}
which gives us 
\begin{equation*}
    \begin{split}
        \log\left|\theta_1\left(2y + i\pi,e^{-2\pi}\right)\right| &= \log\left|\theta_4\left(2y + i\pi -i\pi,e^{-2\pi} \right)\right|+ \log\left|\mathrm{exp}\left(2iy- \pi -\frac{\pi}{2}\right)\right|  = \\&=\log\left|\theta_4\left(2y,e^{-2\pi}\right)\right| + C''.
    \end{split}
\end{equation*}
for some constant $C''$.
\end{rem}

Therefore, the Hamiltonian (\ref{eq: ONE TRUE KLEIN HAM}) is the only one with the two required properties and is well-defined on the square model of the  Klein bottle. Hence, this is the energy function we will use going forward. 
\begin{rem}
In the notations of Part I, the Green's function on $M$ is
\begin{small}
\[
G_M(z,w) = \frac{1}{2\pi}\log\left|\theta_1\left(\frac{z-w}{2},e^{-\frac{\pi}{2}}\right)\right|  - \frac{1}{2\pi}\log\left|\theta_2\left(\frac{z-\bar{w}}{2},e^{-\frac{\pi}{2}}\right)\right|  +\frac{1}{\pi^2}\mathrm{Im}(z)\mathrm{Im}(w)
\]
and  the Robin function is
\[
R_M(y) =  \frac{1}{2\pi}\left(\log\left|\theta_1\left(iy - \frac{\pi}{2},e^{-\frac{\pi}{2}}\right)\right| - \frac{2y^2}{\pi}\right).
\]
\end{small}
\end{rem}
\section{Motion of one vortex}
\label{sec: motion one vort}
\begin{figure}
    \centering
   \subfigure[The Robin function]{\includegraphics[scale =0.45]{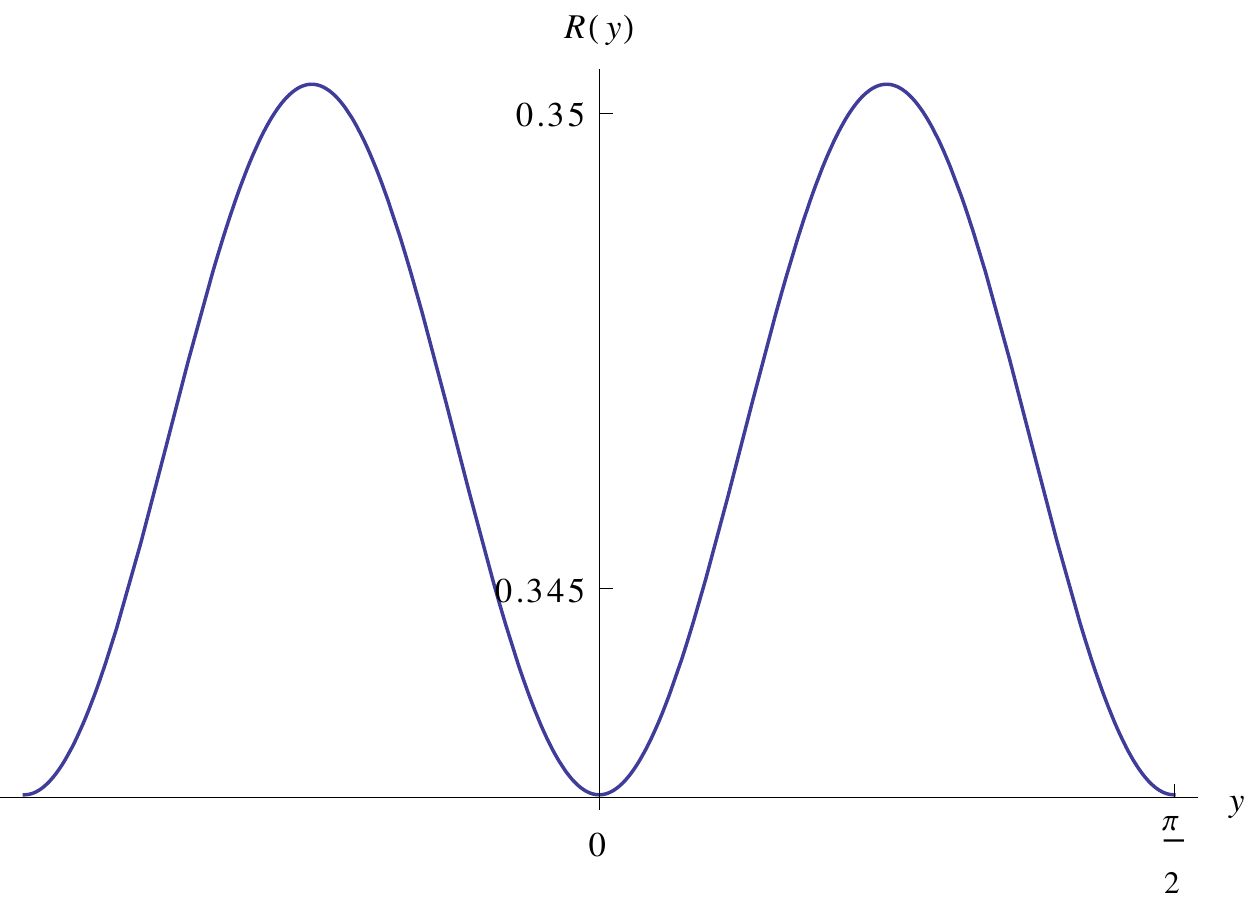}}
   \subfigure[The derivative of the Robin function]{\includegraphics[scale =0.45]{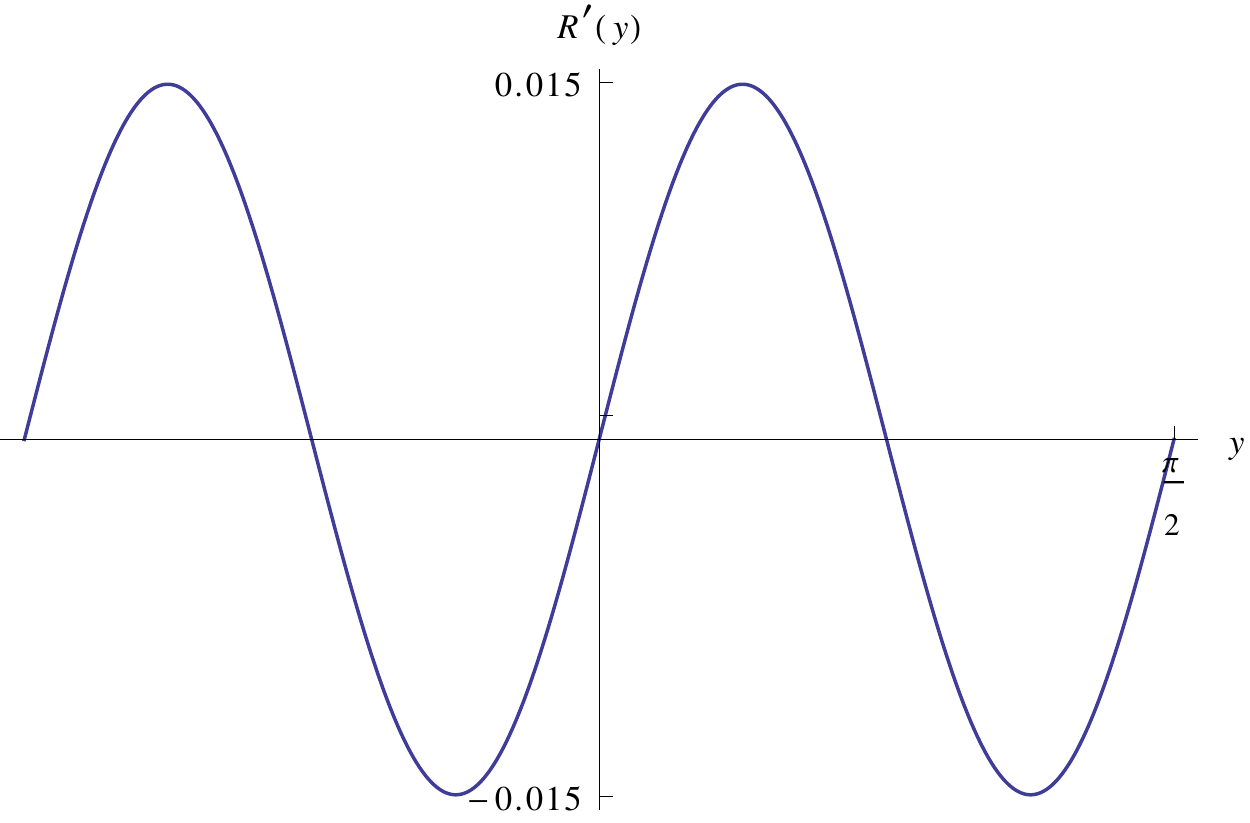}}
   \caption{The Robin function and its derivative}
   \label{fig: Robin function klein}
\end{figure}

A simple calculation gives that the equation of motion of one point vortex of strength $\Gamma$ has the form
\[
\begin{cases}
\dot{x} =  \frac{\Gamma}{4\pi}\left(i\frac{\theta'_1\left(iy - \frac{\pi}{2}\right)}{\theta_1\left(iy - \frac{\pi}{2}\right)} - \frac{4y}{\pi}\right),\\
\dot{y} = 0.
\end{cases}
\]
\begin{rem}
 The expression $\frac{\theta'_1\left(iy - \frac{\pi}{2}\right)}{\theta_1\left(iy - \frac{\pi}{2}\right)}$ is a purely imaginary number when $y\in\mathbb{R}$.
 On the webpage \cite{wolfram} one can find the following relation:
 \begin{equation}
 \label{eq: thetaprime/theta}
 \frac{\theta'_1(z)}{\theta_1(z)} = \cot(z) + 4\sum_{n=1}^{\infty}\frac{q^{2n}\sin(2z)}{q^{4n} - 2q^{2n}\cos(2z) + 1}
 \end{equation}
 When $z = iy - \frac{\pi}{2}$, (\ref{eq: thetaprime/theta}) turns into
 \begin{equation*}
     \begin{split}
         &\cot\left(iy - \frac{\pi}{2}\right)+ 4\sum_{n=1}^{\infty}\frac{q^{2n}\sin(2iy - \pi)}{q^{4n} - 2q^{2n}\cos(2iy - \pi) + 1} = \\&=-i\tanh(y) - 4i\sum_{n=1}^{\infty}\frac{q^{2n}\sinh(2y)}{q^{4n} + 2q^{2n}\cosh(2y ) + 1}
     \end{split}
 \end{equation*}
\end{rem}
The plot of the Robin function  and its derivative are depicted in Figure \ref{fig: Robin function klein} (a) and (b). 

Observe how the Robin function $R(y)$ is even and $\frac{\pi}{2}$-periodic in $y$. Note, however, that the function is not symmetric with respect to the reflection across the $x$-axis and translation by $\pi/4 $ (and neither is, consequently, its derivative). 

The velocity of the point vortex, in turn, is a $\frac{\pi}{2}$-periodic odd function; a solitary  vortex will be stationary if and only if placed on the lines $y = 0,\pm\frac{\pi}{4}, \pm\frac{\pi}{2}$ . Observe that the first and the last lines are the loci of fixed points for the orientation changing isometry, and therefore they will necessarily  be critical points of the Robin function.

\begin{figure}
\centering
 \subfigure[One copy of the bottle]{\includegraphics[scale = 0.42]{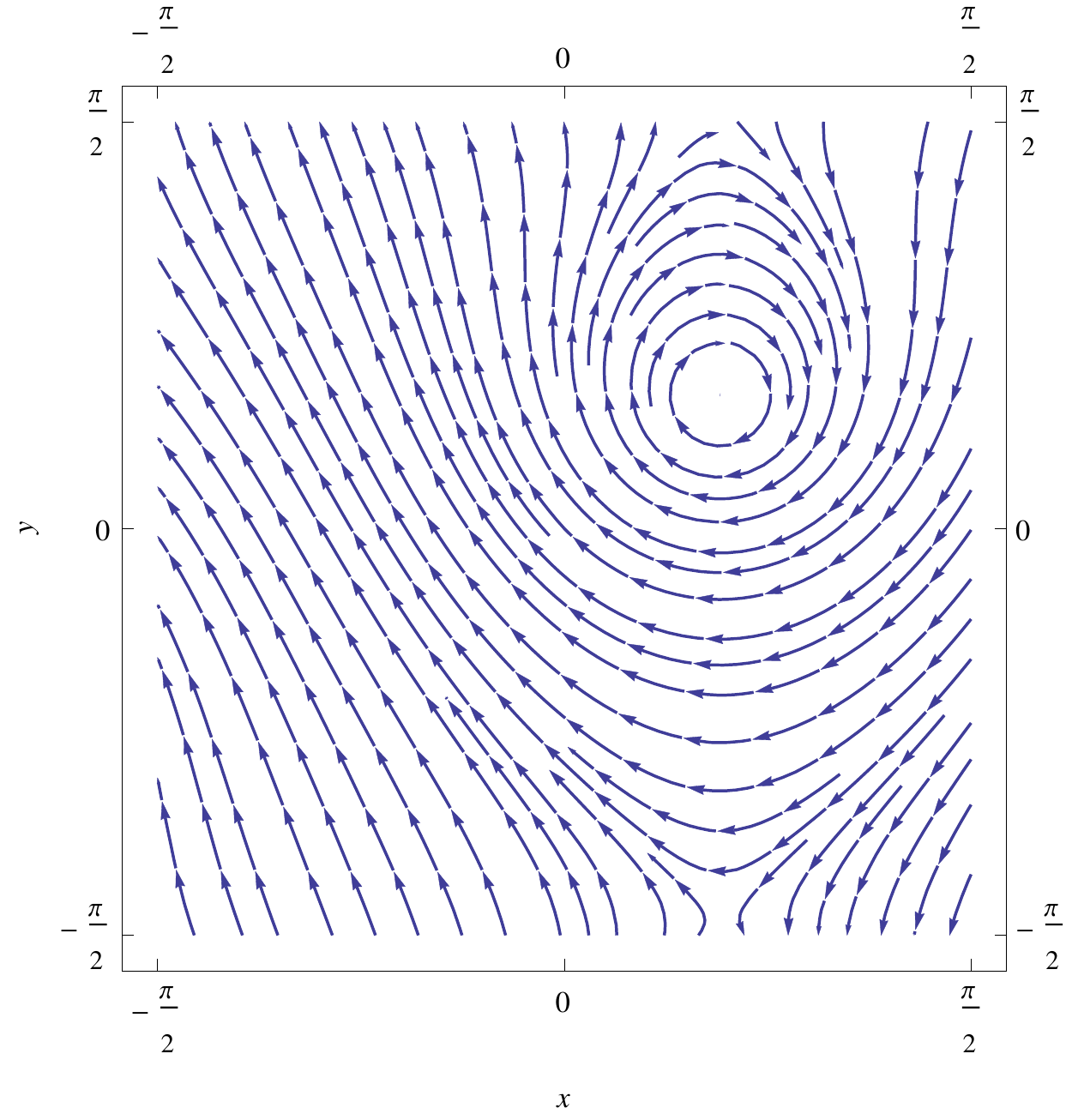}}
\subfigure[Four copies of the bottle]{\includegraphics[scale = 0.42]{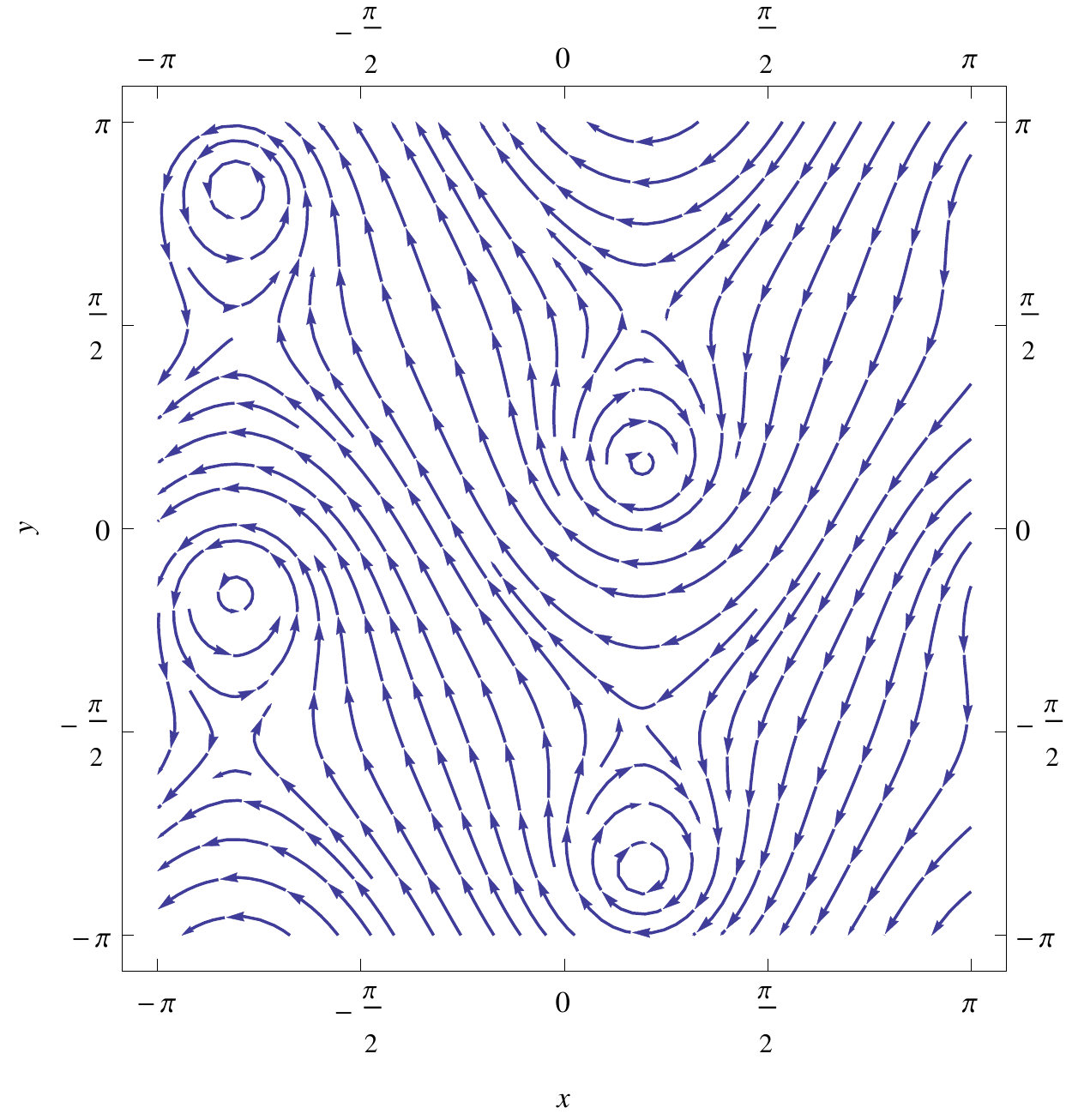}}
 \caption{Vector field created by a single vortex on the Klein bottle}
 \label{fig: vector field single vortex Klein bottle}
 \end{figure}

With the help of the equation of motion (see details below) we can reconstruct the vector field created by one vortex: see Figure \ref{fig: vector field single vortex Klein bottle} (a) for one copy of the band and (b) for multiple.

  \section{Symmetries and invariants}
As in the case of the M\"obius band, the group of symmetries will be $S^1$, acting by horizontal translations. Observe that  now the function $C := \sum\limits_{k}\Gamma_{k}y_{k}$ is  a local invariant of motion.

Due to the nature of the $S^1$-action, relative equilibria on the Klein bottle will behave  identically  to the ones on the M\"obius band: vortices will move horizontally, maintaining a rigid configuration. 

Analogously to Part I and \cite{montaldi2000relative}, we can observe that the following configurations will be fixed and relative equilibria respectively:  
\begin{itemize}
    \item an odd number of  point vortices  on the lines $y = 0$ or $y = \frac{\pi}{2}$ with alternating signs of strengths;  existence of fixed equilibria for arrangements like this can be demonstrated  in precisely the same manner as the one in \cite{montaldi2003vortex}: existence of critical points of he Hamiltonian is deduced from its behaviour at configurations where the vortices collide . 
    \item configurations inherited from $N$-rings on the torus; two aligned rings  when $N$ is even and two staggered ones when $N$ is odd (see \cite{montaldi2000relative} and \cite{laurent2001point} for the application of the Principle of Symmetric Criticality in this case). 
\end{itemize}

  \section{Two vortices}
  \label{sec: two point vortices}
 The Hamiltonian has the form 
 \begin{equation}
 \label{eq: Ham Klein two vort}
 \begin{split}
     \mathcal{H} =& -\frac{1}{2\pi}\Gamma_1\Gamma_2\log\left|\theta_1\left(\frac{z_1-z_2}{2},e^{-\frac{\pi}{2}}\right)\right| + \frac{1}{2\pi}\Gamma_1\Gamma_2\log\left|\theta_2\left(\frac{z_1-\bar{z}_2}{2},e^{-\frac{\pi}{2}}\right)\right| +\\&- \frac{1}{\pi^2}\Gamma_1\Gamma_2y_1y_2 + \frac{1}{4\pi}\Gamma_1^2\left(\log\left|\theta_1\left(iy_1 - \frac{\pi}{2},e^{-\frac{\pi}{2}}\right)\right| - \frac{2y_1^2}{\pi}\right) + \\&+ \frac{1}{4\pi}\Gamma_2^2\left(\log\left|\theta_1\left(iy_2 - \frac{\pi}{2},e^{-\frac{\pi}{2}}\right)\right| - \frac{2y_2^2}{\pi}\right)
     \end{split}
 \end{equation}
  
\begin{figure}
    \centering
    \includegraphics[scale = 0.5]{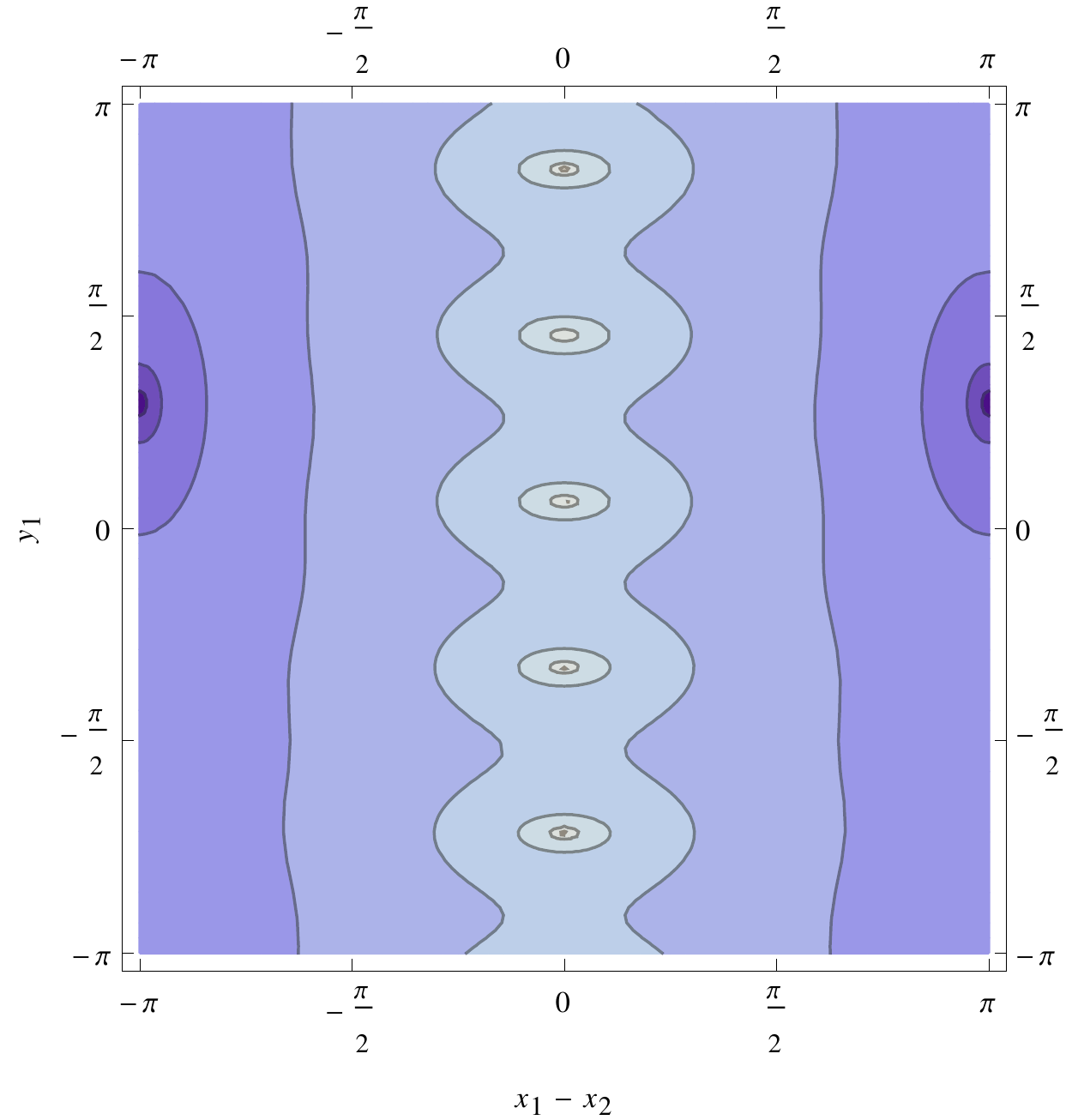}
    \caption{Level sets of the reduced  Hamiltonian for the Klein bottle}
    \label{fig:my_label}
\end{figure}

We recall the following formula form \cite{montaldi2003vortex}:
\[
\dot{z}_k = -2i\frac{\partial\mathcal{H}}{\partial \bar{z}_k}.
\]
Rewriting (\ref{eq: Jac th element props}) through $\bar{z}_k$ with the help of the relations (\ref{eq: more properties of thetas}), we get for the equations of motion:
\begin{footnotesize}
\begin{equation}
    \label{eq: Klein equations of motion}
    \begin{cases}
    \dot{z}_1 &=-2i\Bigl[-\frac{1}{4\pi}\Gamma_2\frac{\theta'_1\left(\frac{\bar{z}_1 - \bar{z}_2}{2},e^{-\frac{\pi}{2}} + \right)}{\theta_1\left(\frac{\bar{z}_1 - \bar{z}_2}{2},e^{-\frac{\pi}{2}} + \right)}  +\frac{1}{4\pi}\Gamma_2\frac{\theta'_2\left(\frac{\bar{z}_1 - z_2}{2},e^{-\frac{\pi}{2}} + \right)}{\theta_2\left(\frac{\bar{z}_1 - z_2}{2},e^{-\frac{\pi}{2}} + \right)} + \frac{1}{4\pi^2}\Gamma_2\left(\bar{z}_2-z_2\right) + \frac{1}{4\pi}\Gamma_1\Bigl(\frac{\theta_1'\left(\frac{\bar{z}_1-z_1}{2} + \frac{\pi}{2}, e^{-\frac{\pi}{2}}\right)}{\theta_1\left(\frac{\bar{z}_1-z_1}{2} + \frac{\pi}{2}, e^{-\frac{\pi}{2}}\right)} +\\&+ \frac{\bar{z}_1-z_1}{\pi}\Bigr)\Bigr]\\
    \dot{z}_2 &=-2i\Bigl[\frac{1}{4\pi}\Gamma_1\frac{\theta'_1\left(\frac{\bar{z}_1 - \bar{z}_2}{2},e^{-\frac{\pi}{2}} + \right)}{\theta_1\left(\frac{\bar{z}_1 - \bar{z}_2}{2},e^{-\frac{\pi}{2}} + \right)}  +\frac{1}{4\pi}\Gamma_1\frac{\theta'_2\left(\frac{\bar{z}_2 - z_1}{2},e^{-\frac{\pi}{2}} + \right)}{\theta_2\left(\frac{\bar{z}_2 - z_1}{2},e^{-\frac{\pi}{2}} + \right)} + \frac{1}{4\pi^2}\Gamma_1\left(\bar{z}_1-z_1
    \right) + \frac{1}{4\pi}\Gamma_2\Bigl(\frac{\theta_1'\left(\frac{\bar{z}_2-z_2}{2} + \frac{\pi}{2}, e^{-\frac{\pi}{2}}\right)}{\theta_1\left(\frac{\bar{z}_2-z_2}{2} + \frac{\pi}{2}, e^{-\frac{\pi}{2}}\right)} +\\&+ \frac{\bar{z}_2-z_2}{\pi}\Bigr)\Bigr]
    \end{cases}
\end{equation}
\end{footnotesize}
Rewriting (\ref{eq: Ham Klein two vort}) with $x_i$ and $y_i$ rather than $z_i$, one can observe that the Hamiltonian depends on $x_1-x_2$ rather than on $x_1$ and $x_2$ separately. Additionally, we have a constant of motion $C = \Gamma_1y_1 + \Gamma_2y_2$; above we have stressed that this invariant is a local one.

However after making some adjustments to our method, we may treat it as a global one: we suppose that the initial placement of the two vortices is such that their $y-$coordinates lie in the interval $\left[-\frac{\pi}{2},\frac{\pi}{2}\right]$ and proceed to observe this concrete pair of vortices without bounding their $y$-coordinates. In doing so we transfer to a covering system on a cylinder (we will explain this in detail below). The motion of these two vortices defines the motion of entire system, therefore we lose no information.  For the covering  system, the value of  $C$ does not change throughout the motion. Therefore, in following one concrete pair of vortices we may restore our motion while  treating $C$ as a global constant.

 From the periodicity of the Hamiltonian in each $x_i$ we deduce that it is $2\pi$-periodic in $x_1-x_2$; we assume the signs of $\Gamma_1,\Gamma_2$ (supposing, as before, that $\Gamma_1>\Gamma_2>0$) and therefore have to consider the entire period in $x_1-x_2$. Due to this periodicity $x_1-x_2\in (-\pi,\pi)$. Next, we substitute $y_2 = \frac{C}{\Gamma_2} - \frac{\Gamma_1}{\Gamma_2}y_1$ into the Hamiltonian (\ref{eq: Ham Klein two vort}). Since we asume that the values of $y_1$ are unlimited, our covering space will be a cylinder.  By drawing the level sets of the reduced Hamiltonian, we obtain Figure \ref{fig:my_label}

 Note how for the reduced Hamiltonian the  periodicity in $x$-coordinate is retained while  periodicity in $y_1$ is lost -once again, this happens due to $C$ being the local invariant.

\begin{lem}
Critical points of the  reduced Hamiltonian  belong to the lines $x_1-x_2 = 0,\pm \pi$. 
\end{lem}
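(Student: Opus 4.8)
The plan is to use the fact that a critical point of the reduced Hamiltonian must in particular satisfy $\partial\mathcal H/\partial(x_1-x_2)=0$, and to show that this single equation already forces $x_1-x_2\in\{0,\pm\pi\}$. After the substitution $y_2=\tfrac{C}{\Gamma_2}-\tfrac{\Gamma_1}{\Gamma_2}y_1$, the variable $x_1-x_2$ enters (\ref{eq: Ham Klein two vort}) only through $z_1-z_2$ and $z_1-\bar z_2$, so the $y_1y_2$-term and the two Robin terms drop out of the derivative and
\[
\frac{\partial\mathcal H}{\partial(x_1-x_2)}=\frac{\Gamma_1\Gamma_2}{4\pi}\left(\mathrm{Re}\,\frac{\theta_2'}{\theta_2}\!\left(\frac{z_1-\bar z_2}{2}\right)-\mathrm{Re}\,\frac{\theta_1'}{\theta_1}\!\left(\frac{z_1-z_2}{2}\right)\right).
\]
The first step is to expand the two logarithmic derivatives: for $\theta_1'/\theta_1$ I would use (\ref{eq: thetaprime/theta}), and for $\theta_2'/\theta_2$ the same series after the shift $\theta_2(z)=\theta_1(z+\tfrac\pi2)$ from (\ref{eq: more properties of thetas}).

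Writing $z_1-z_2=a+ic$ with $a=x_1-x_2$, one separates real and imaginary parts of $\sin(a+ic)$ and of the denominator $q^{4n}-2q^{2n}\cos(a+ic)+1$; one finds $\mathrm{Re}\cot\tfrac{a+ic}{2}=\frac{\sin a}{\cosh c-\cos a}$, and checks that the $n$-th summand of the series likewise carries an explicit factor $\sin a$. Carrying this out for both quotients yields a factorisation
\[
\frac{\partial\mathcal H}{\partial(x_1-x_2)}=\frac{\Gamma_1\Gamma_2}{4\pi}\,\sin(x_1-x_2)\,\bigl(f_2(x_1-x_2,\,y_1+y_2)-f_1(x_1-x_2,\,y_1-y_2)\bigr),
\]
where $f_1$ collects the $\theta_1$-part and $f_2$ the $\theta_2$-part. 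It then suffices to prove that the cofactor $f_2-f_1$ is nonzero on the relevant range $x_1-x_2\in(-\pi,\pi)$, with the two vortices and their periodic/mirror images kept apart.

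The heart of the matter is a sign computation. After the termwise simplification one should reach, for each $n\ge1$, numerators of the shape $q^{2n}\bigl((q^{4n}+1)\cosh(\cdot)\mp2q^{2n}\cos a\bigr)$ over strictly positive denominators $D_n^2+E_n^2$; the elementary bound
\[
(q^{4n}+1)\cosh c\;\mp\;2q^{2n}\cos a\;\ge\;(q^{4n}+1)-2q^{2n}=(q^{2n}-1)^2>0
\]
makes every summand of $f_1$ strictly positive and every summand of $f_2$ strictly negative, while the leading terms give $\frac{1}{\cosh c-\cos a}>0$ for $f_1$ and $-\frac{1}{\cosh(y_1+y_2)+\cos a}<0$ for $f_2$ (the latter finite precisely because $\cos a>-1$ when $a\ne\pm\pi$). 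Hence $f_1>0>f_2$, so $f_2-f_1<0$ throughout, and $\partial\mathcal H/\partial(x_1-x_2)$ vanishes if and only if $\sin(x_1-x_2)=0$; in particular every critical point of the reduced Hamiltonian lies on one of the lines $x_1-x_2=0,\pm\pi$. (The conclusion is, incidentally, insensitive to the sign of $\Gamma_1\Gamma_2$.)

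I expect the main obstacle to be the bookkeeping in the factorisation step rather than any deep difficulty: one must keep careful track of the real and imaginary parts of the $n$-th term of each $\theta'/\theta$-series, confirm that the denominators $D_n^2+E_n^2$ are strictly positive away from the excluded collision loci (where they reflect zeros of $\theta_1$ or $\theta_2$), and verify that the termwise collapse really delivers the uniform lower bound above; the lines $x_1-x_2=\pm\pi$, on which the $\theta_2$-contribution is singular, lie inside the claimed set and so are handled trivially. It is worth adding that the evenness driving the argument — $\mathcal H$ is even in $x_1-x_2$ and $2\pi$-periodic, hence also even about $x_1-x_2=\pi$ — shows conversely that $\partial\mathcal H/\partial(x_1-x_2)$ vanishes identically on $x_1-x_2=0,\pm\pi$, so these lines genuinely carry the relative equilibria discussed above; should the sign estimate for the cofactor fail to close uniformly, one falls back on the numerical evidence already announced in the introduction.
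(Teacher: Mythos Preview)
Your approach is correct and in fact goes well beyond what the paper does. The paper does \emph{not} give a rigorous proof of this lemma: it reduces the claim to showing that $Y_1:=\mathrm{Re}\bigl(\partial\mathcal H/\partial\bar z_1\bigr)\big|_{\Gamma_1=0}$ vanishes only for $x_1-x_2=0,\pm\pi$, explicitly declares that ``a rigorous proof presents an insurmountable computational challenge'', and then rests the statement on numerical plots of $Y_1$ together with a heuristic reading of the single-vortex streamline picture in Figure~\ref{fig: vector field single vortex Klein bottle}. Your argument, by contrast, is fully analytic: differentiating only in $x_1-x_2$, inserting the $q$-series (\ref{eq: thetaprime/theta}) for $\theta_1'/\theta_1$ and its $\pi/2$-shift for $\theta_2'/\theta_2$, and observing that the real part of every term carries an explicit factor $\sin(x_1-x_2)$, you reduce the question to a sign estimate on the cofactor. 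That estimate is genuine --- with $q=e^{-\pi/2}<1$ the $n$-th numerator satisfies $(q^{4n}+1)\cosh(\cdot)\mp 2q^{2n}\cos a\ge(q^{2n}-1)^2>0$, the leading $\cot$-terms give $1/(\cosh c-\cos a)>0$ and $-1/(\cosh b+\cos a)<0$ away from collisions, and the denominators $D_n^2+E_n^2$ vanish only at zeros of $\theta_1$ or $\theta_2$, i.e.\ at the excluded collision loci --- so $f_1>0>f_2$ and $\partial\mathcal H/\partial(x_1-x_2)=0$ forces $\sin(x_1-x_2)=0$. What the paper's treatment buys is brevity and a physical picture of why the flow cannot be horizontal off the distinguished vertical lines; what your route buys is an actual proof, upgrading the lemma from ``numerically observed'' to ``proved'', and it does so with no more machinery than the series expansion already quoted in the paper. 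Your hedge at the end (``should the sign estimate fail\ldots'') is unnecessary: the bookkeeping closes exactly as you outline.
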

Critical points of the reduced Hamiltonian correspond to relative equilibria, which we have established to be horizontally moving configurations. Therefore, we need to demonstrate that the flow generated by a solitary vortex is horizontal only on vertical lines that contain the centre of our vortex and the centres of its copies.  In order to do so, we investigate  zeros of $Y_1 :=\mathrm{Re}\left(\frac{\partial \mathcal{H}}{\partial \bar{z_1}}\right)$ after  substituting $\Gamma_1=0$.

\begin{figure}
    \centering
    \subfigure{\includegraphics[scale  = 0.35]{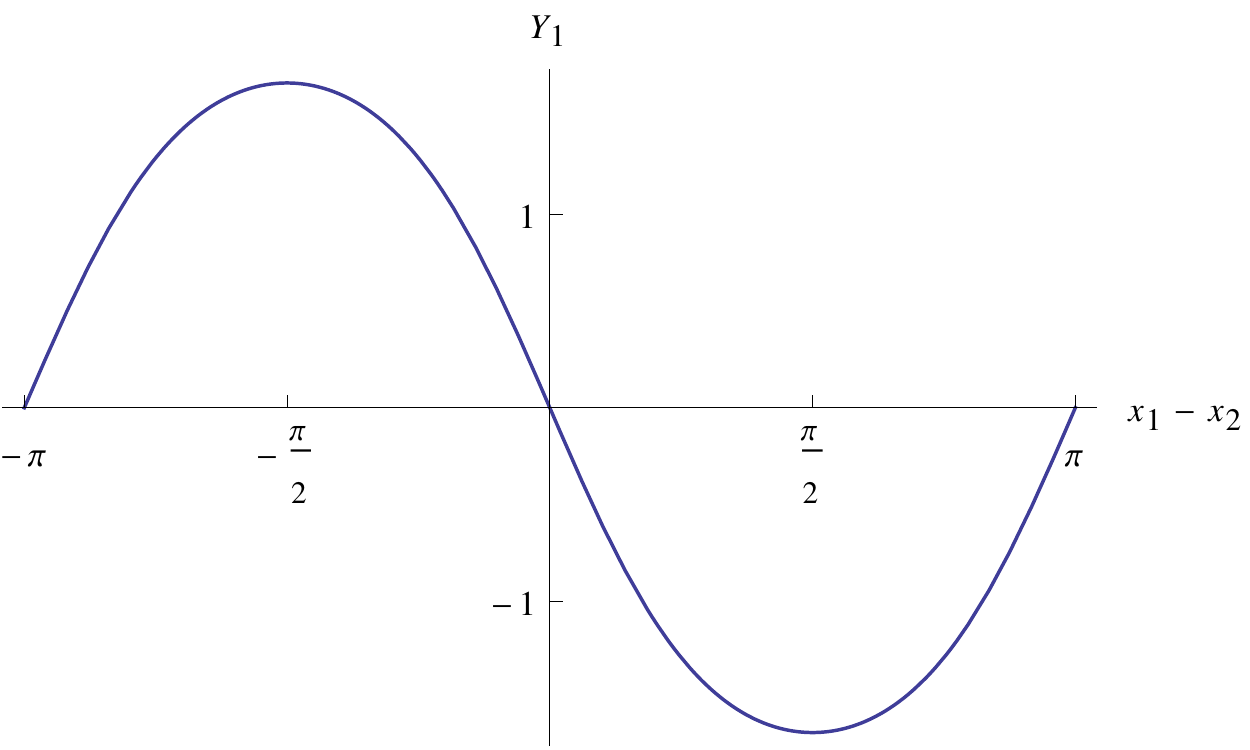}}\ \ 
     \subfigure{\includegraphics[scale  = 0.35]{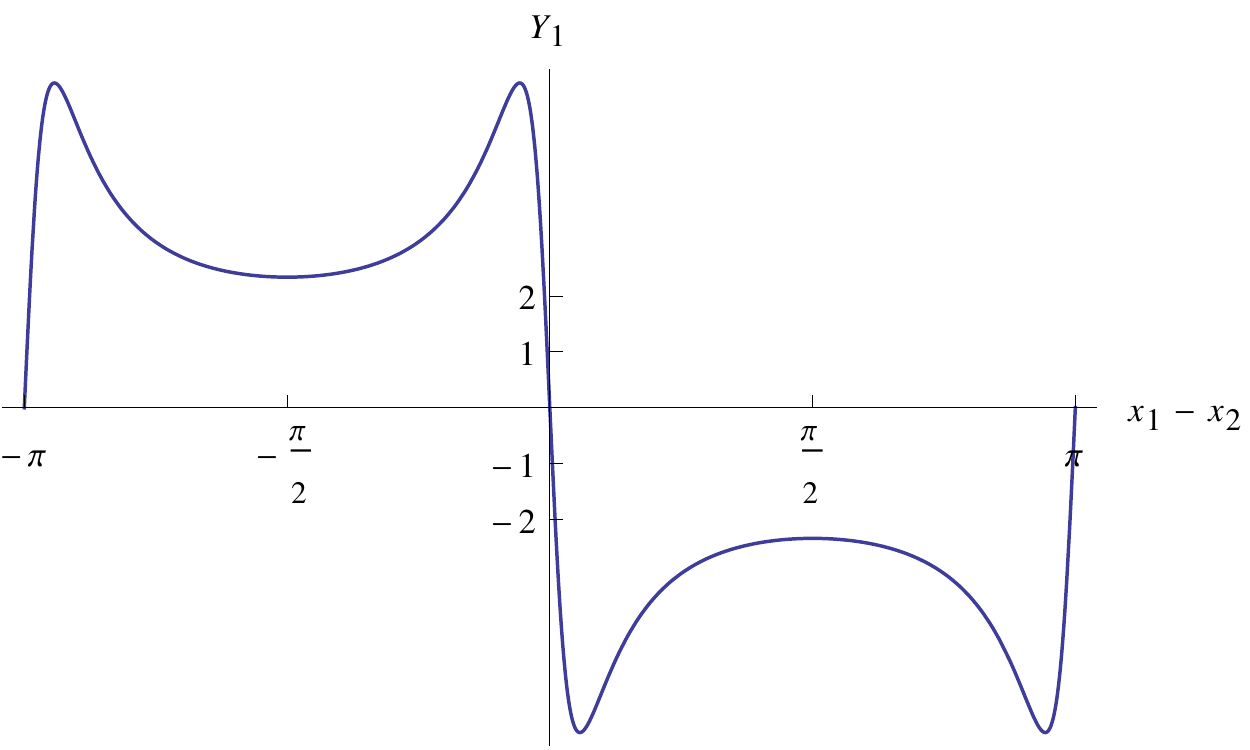}}\ \ 
      \subfigure{\includegraphics[scale  = 0.35]{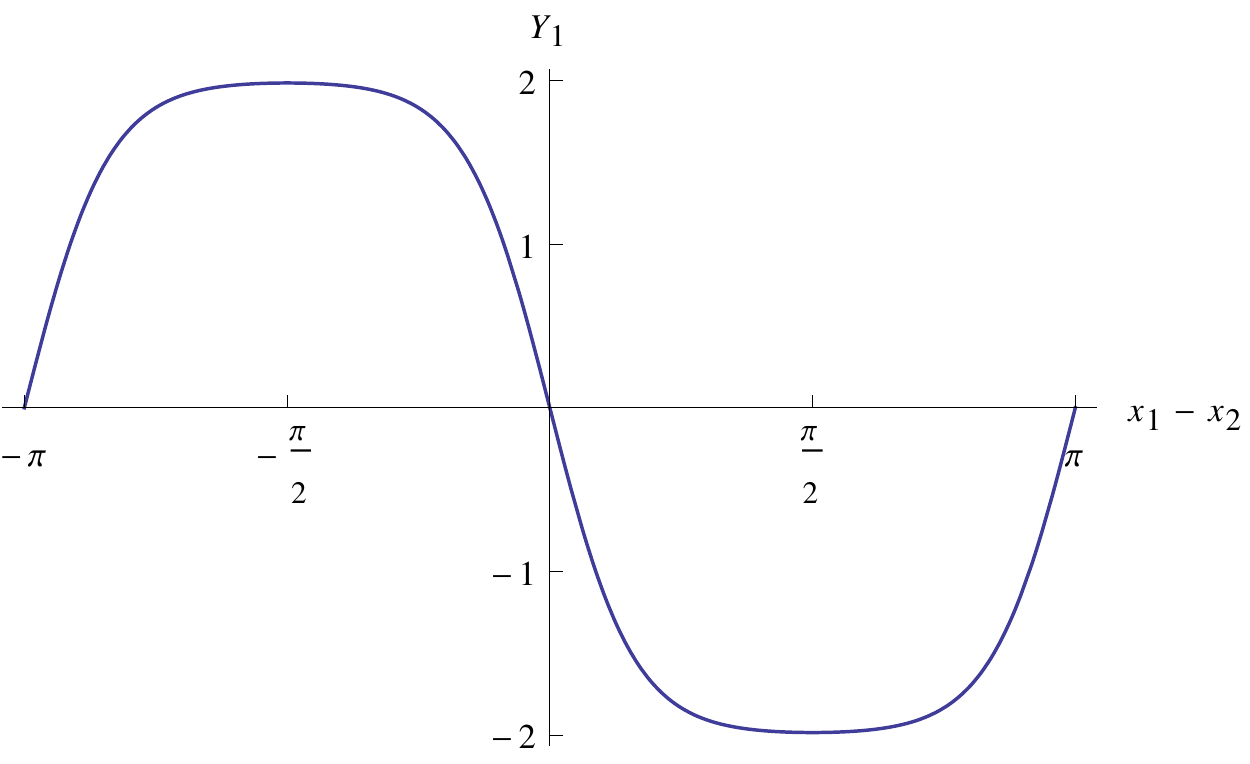}}
    \caption{Three main types of plots of $Y_1$ for fixed $y_1$, $y_2$.  }
    \label{fig: y dot, y not}
\end{figure}
A rigorous proof presents an insurmountable computational challenge; however, numerics demonstrate that for all values of $y_1$ and $y_2$ with $y_1\ne y_2$ the function $Y_1$ turns zero if and only if $x_1-x_2 = 0,\pm \pi$. The three main types of plots for $Y_1$ with fixed values of $y_1$ and $y_2$ are presented in Figure \ref{fig: y dot, y not}. 

  Additionally, the statement of the lemma  can be observed  from Figure \ref{fig: vector field single vortex Klein bottle}(a). The velocity of the flow is inversely proportional to the distance to the vortex; hence, inside the separatrix loops in figure \ref{fig: vector field single vortex Klein bottle}(b) there is no counteracting flow strong enough to change the one induced by the nearest vortex. On the trajectories between the vortices, the vertical components of flows created by nearest vortices have the same sign, and are zero only on the vertical lines to which the centres belong. Hence, the flow will not be horizontal unless on the lines in question.

  

\begin{lem}
The only singular points of the reduced Hamiltonian on the lines $x = 0, \ \pm\pi$ are the points of the form $y_1= \frac{\pi k\Gamma_2 + c}{\Gamma_1 + \Gamma_2}$ and $y_1 = \frac{k\pi\Gamma_2-c}{\Gamma_2 -\Gamma_1}$ respectively, where $k\in\mathbb{Z}$. 
\end{lem}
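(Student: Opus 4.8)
The plan is to locate the singularities of the reduced Hamiltonian by reading off, from the form \eqref{eq: Ham Klein two vort}, which of the three $\theta_1$-factors degenerate on the lines $x_1-x_2 = 0,\pm\pi$. Recall that $\theta_1(w,q)$ has simple zeros exactly at the lattice points $w \in \pi\mathbb Z + \pi\tau\mathbb Z$; hence $\log|\theta_1(w,q)|$ blows up to $-\infty$ precisely there. The reduced Hamiltonian is obtained from \eqref{eq: Ham Klein two vort} by setting $y_2 = C/\Gamma_2 - (\Gamma_1/\Gamma_2)y_1$, so $\theta_2$-terms (which are $\theta_1$ shifted by $\pi/2$, hence never zero for real argument on the relevant lines) stay bounded, and only the first and the two self-interaction $\theta_1$-terms can be singular. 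First I would analyse the line $x_1-x_2 = 0$. Here the argument of the first $\theta_1$ is $\tfrac12(z_1-z_2) = \tfrac{i}{2}(y_1-y_2)$, which is purely imaginary, so that factor is singular iff $y_1-y_2 \in 2\pi\mathbb Z$ modulo the period; combining this with the constraint $\Gamma_2 y_2 = c - \Gamma_1 y_1$ gives $y_1(\Gamma_1+\Gamma_2) = c + (\text{multiple of }\pi\Gamma_2)$, i.e.\ $y_1 = (\pi k\Gamma_2 + c)/(\Gamma_1+\Gamma_2)$ — the claimed family. The self-interaction arguments $iy_\alpha - \pi/2$ are never at a zero of $\theta_1$ for real $y_\alpha$ (they sit on the $\theta_2$-locus), so the self-terms contribute no singularities, and this accounts for all singular points on $x_1-x_2=0$.

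Next I would repeat the computation on the line $x_1-x_2 = \pm\pi$, where the relevant factor is $\theta_1\!\left(\tfrac12(\pm\pi + i(y_1-y_2))\right) = \theta_2\!\left(\tfrac{i}{2}(y_1-y_2)\right)$ after using $\theta_1(z+\pi/2)=\theta_2(z)$ and $\pi$-antiperiodicity from \eqref{eq: more properties of thetas}; I would check carefully which of the two $\theta_1/\theta_2$-terms in \eqref{eq: Ham Klein two vort} then degenerates (the image term $\theta_2\!\left(\tfrac{z_1-\bar z_2}{2}\right)$ has argument $\tfrac12((x_1-x_2) + i(y_1+y_2))$, so on $x_1-x_2=\pm\pi$ it picks up the half-period shift and it is this factor whose zero condition now reads $y_1+y_2 \in 2\pi\mathbb Z$ up to periods). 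Imposing the constraint $\Gamma_2 y_2 = c - \Gamma_1 y_1$ on $y_1+y_2 = (\text{multiple of }\pi)$ then yields $y_1(\Gamma_2-\Gamma_1) = k\pi\Gamma_2 - c$, i.e.\ $y_1 = (k\pi\Gamma_2 - c)/(\Gamma_2-\Gamma_1)$, as stated. Throughout, the point is that a singularity of the reduced Hamiltonian at $(x_1-x_2, y_1)$ corresponds exactly to a collision in the covering system — either $z_1 = z_2$ (a genuine vortex–vortex collision) or $z_1 = \bar z_2 + \pi$ (a collision of a vortex with the covering copy of the other), which is why only these two $\theta_1$-factors matter.

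The main obstacle I anticipate is bookkeeping the half-period shifts and the factor $\tfrac12$ in the arguments so that the lattice of zeros is transcribed correctly: $\theta_1(\tfrac{z}{2},e^{-\pi/2})$ is a function of $z$ with zeros on a $2\pi$-by-$2\pi i$-ish lattice rather than the naive $\pi$-lattice, and one must be consistent about which lifts $z_\alpha$ are ``preferred'' and about the $2\pi$-periodicity in $x_1-x_2$ versus the $\pi$-periodicity in the $y$'s, so as not to double-count or miss a coset of solutions. A secondary point needing care is verifying that the self-interaction Robin terms $\log|\theta_1(iy_\alpha - \pi/2, e^{-\pi/2})|$ really are regular for all real $y_\alpha$ — this follows because $iy_\alpha - \pi/2$ never lands on the zero lattice of $\theta_1$, equivalently because the Robin function equals $\log|\theta_4(2y_\alpha,e^{-2\pi})| + C$ by the Remark and $\theta_4$ has no real zeros — but it should be stated explicitly so that the list of singular points is genuinely complete. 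Once these shifts are pinned down, the rest is the elementary linear algebra of intersecting the zero condition with the line $\Gamma_2 y_2 = c - \Gamma_1 y_1$ carried out above.
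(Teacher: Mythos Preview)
Your approach is correct and essentially the same as the paper's: both identify the singularities of the reduced Hamiltonian with collision configurations in the covering system (either $z_1$ with a lift of $z_2$, or $z_1$ with a lift of $\mu(z_2)=\bar z_2+\pi$) and then intersect the resulting condition on $y_1\pm y_2$ with the momentum constraint $\Gamma_1 y_1+\Gamma_2 y_2=C$. The paper is terser---it simply asserts that all singularities arise from collisions and verifies the algebra for the stated values of $y_1$---while you track the zero loci of the individual $\theta$-factors explicitly, which has the virtue of making the completeness claim (in particular the regularity of the Robin terms) fully transparent.
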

\begin{proof}
All singularities that the Hamiltonian has are at the configurations where the point vortices collide. When $x_1-x_2 = 0$ and $y_1 = \frac{k\pi\Gamma_2+C}{\Gamma_1 + \Gamma_2}$, $y_2$ is equal to $\frac{C-k\pi\Gamma_1}{\Gamma_1 + \Gamma_2}$. Then the following relation holds:
\[
y_1-k\pi = \frac{C-k\pi\Gamma_1}{\Gamma_1 + \Gamma_2}= y_2,
\]
and the covering copies of the two vortices on the plane collide.

When $x_1-x_2 =\pm\pi$, $y_1 = \frac{k\pi\Gamma_2-C}{\Gamma_2-\Gamma_1}$, which entails $y_2 = \frac{C-k\pi\Gamma_1}{\Gamma_2 - \Gamma_1}$.
Therefore,
\[
y_1 - k\pi = \frac{k\pi\Gamma_1-C}{\Gamma_2-\Gamma_1} = -y_2. 
\]
Similarly to the case of the M\"obius band, the first point vortex in this case collides with a '-' copy of the second one. 

\end{proof}
\begin{theorem}
Consider the level sets of the Hamiltonian, as drawn in  Figure \ref{fig:my_label}. On the closed curves  the motion will be the same as in Regions I and II on the M\"obius band. On non-closed trajectories the global motion will have both vertical and horizontal translational components, and the relative motion of the vortices need not be periodic.
\end{theorem}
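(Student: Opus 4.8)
The proof proceeds by unwinding the reduction carried out in Section \ref{sec: two point vortices} and transferring the phase portrait of the reduced system back to the covering cylinder and then to the Klein bottle. First I would recall the set-up precisely: after fixing $\Gamma_1 > \Gamma_2 > 0$, setting $w = x_1 - x_2 \in (-\pi,\pi)$ and eliminating $y_2 = C/\Gamma_2 - (\Gamma_1/\Gamma_2)y_1$, the reduced Hamiltonian $\mathcal{H}_{\mathrm{red}}(w, y_1)$ is a function on the cylinder $(-\pi,\pi)_w \times \mathbb{R}_{y_1}$ with the singular set described by Lemma 5.4 (the collision points on $w = 0$ and $w = \pm\pi$). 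The level sets of $\mathcal{H}_{\mathrm{red}}$ are exactly the curves drawn in Figure \ref{fig:my_label}. The content of the theorem is then a dictionary between (i) closed level curves, which give periodic relative motion, and (ii) non-closed level curves, whose endpoints run off to $y_1 \to \pm\infty$ on the cylinder, which give aperiodic relative motion and non-trivial net translation; and in both cases one must also recover the motion of the original (un-lifted) system on the Klein bottle.

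The key steps, in order, are as follows. (1) Establish that on a compact connected component of a level set $\{\mathcal{H}_{\mathrm{red}} = h\}$ avoiding the singular set, the reduced flow $\dot w = -\partial_{y_1}\mathcal{H}_{\mathrm{red}}$, $\dot y_1 = \partial_w \mathcal{H}_{\mathrm{red}}$ (a Hamiltonian flow in one degree of freedom after the reduction) is periodic, by the standard argument that a bounded regular level set of a planar Hamiltonian system is a single periodic orbit; here one must check regularity, i.e. that $\nabla\mathcal{H}_{\mathrm{red}}$ does not vanish on the curve except at the relative equilibria of Lemma 5.3, so that the closed curves in Figure \ref{fig:my_label} that are not equilibrium points are genuine periodic orbits. (2) Reconstruct the full motion over such a periodic reduced orbit: by the $S^1$-reduction picture (exactly as on the M\"obius band in Part I), the ignored variable, say $\sigma = \Gamma_1 x_1 + \Gamma_2 x_2$ or equivalently the phase along the $S^1$-orbit, evolves by $\dot\sigma = \partial\mathcal{H}/\partial(\text{conjugate momentum})$, which is a function of $(w,y_1)$ alone, hence periodic along the reduced orbit; its average gives a drift velocity. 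This reproduces the behaviour called ``Regions I and II on the M\"obius band.'' (3) For a non-closed component of a level set: show it is a properly embedded line in the cylinder with both ends escaping to $y_1 = +\infty$ or $y_1 = -\infty$ (using that the only way a level curve can fail to close is to be unbounded in $y_1$, since it cannot cross the lines $w = 0, \pm\pi$ except at the isolated singular points and is confined to $w \in (-\pi,\pi)$). Along such an orbit $y_1(t)$ is monotone for $|t|$ large (because $\dot y_1 = \partial_w\mathcal{H}_{\mathrm{red}}$ has a definite sign near the ends, by the asymptotics of the theta-function terms as $y_1 \to \pm\infty$ worked out for the Robin function in Section \ref{sec: motion one vort}), so the motion is not periodic. (4) Project back to the Klein bottle: since $y_1$ grows without bound on the cylinder, the actual vortices re-enter the fundamental square infinitely often, and the combination of the $\sigma$-drift (horizontal) with the $y_1$-drift (vertical, modulo $\pi$) produces a trajectory on the Klein bottle with both horizontal and vertical translational components, while the relative configuration $(w, y_1 \bmod \pi, y_2 \bmod \pi)$ need not be periodic because $y_1$ and $\sigma$ drift with generically incommensurable rates.

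The main obstacle is step (3)–(4): proving rigorously that a non-closed level curve really does escape to $y_1 \to \pm\infty$ and that the induced drift is genuinely aperiodic rather than eventually periodic. This requires control of $\mathcal{H}_{\mathrm{red}}(w,y_1)$ as $y_1 \to \pm\infty$ for $w$ in a compact subinterval of $(-\pi,\pi)$: one must show that the $-\tfrac{1}{\pi^2}\Gamma_1\Gamma_2 y_1 y_2$ term (which, after substitution, is a downward parabola in $y_1$) dominates the logarithmic theta-terms, so that high level sets are confined near $w = 0, \pm\pi$ and low ones are unbounded, pinning down which curves in Figure \ref{fig:my_label} are closed and which are not. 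I would handle this via the quasi-periodicity and product formulae for $\theta_1, \theta_2$ recorded in \eqref{eq: more properties of thetas} and \eqref{eq: connection with i}, extracting the leading $y_1^2$ and $y_1$ behaviour; the aperiodicity claim then follows because the vertical drift rate and the horizontal drift rate are independent non-vanishing constants along the escaping orbit, so their ratio is generically irrational. A secondary, more delicate point is justifying the reduction-and-reconstruction on the non-orientable base (the passage from the cylinder cover back to the Klein bottle), but this is handled exactly as in Part I and on the M\"obius band, so I would invoke those results rather than redo them.
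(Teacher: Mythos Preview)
Your approach is the paper's: classify the reduced level curves on the covering cylinder into closed (around the singularities on $w=0$ and on $w=\pm\pi$, the paper's Types I and II) versus non-closed (Type III), reconstruct the ignored $S^1$-variable as a drift exactly as in Part I, and infer unbounded $y_1$-motion on the open curves. The paper is, if anything, less detailed than you are, invoking Part I verbatim for the reconstruction integrals $\int_0^T\dot x_k\,\mathrm{d}t$ and reading the closed/non-closed dichotomy directly off Figure~\ref{fig:my_label}.

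Two small points in your write-up need fixing. First, the claim in step~(3) that a level curve ``cannot cross the lines $w=0,\pm\pi$ except at the isolated singular points'' is false: every closed curve of Type I crosses $w=0$ twice, so this cannot be the mechanism forcing non-closed curves to escape in $y_1$. The paper does not attempt to prove this and simply reads it from the figure. Second, your asymptotic analysis is off: after substituting $y_2=C/\Gamma_2-(\Gamma_1/\Gamma_2)y_1$, the term $-\tfrac{1}{\pi^2}\Gamma_1\Gamma_2 y_1y_2$ contributes $+\tfrac{\Gamma_1^2}{\pi^2}y_1^2$, which exactly cancels the $-\tfrac{2y_\alpha^2}{\pi}$ pieces of the two Robin terms, so there is no surviving quadratic (up or down); the large-$y_1$ behaviour is governed by the theta-logarithms instead. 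Finally, the paper supplies one argument you do not: in Regions I and II a tight pair emulates a solitary vortex, so by Section~\ref{sec: motion one vort} the pair is stationary precisely when $C\in\{0,\pm\tfrac{\pi}{4}(\Gamma_1+\Gamma_2),\pm\tfrac{\pi}{2}(\Gamma_1+\Gamma_2)\}$ and otherwise has nonzero horizontal drift --- this, together with the behaviour near the relative equilibria on $w=0$, is how the paper gets the horizontal component, rather than via your incommensurability argument.
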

\begin{proof}
In here, we rely heavily on the results that we obtained for the case of the M\"obius band in Part I: all the reasoning about smooth and non-trivial dependence of integrals $\int\limits_0^T\dot{x}_k\mathrm{d}t$ on the trajectory can be repeated verbatim. 

We single out three separate cases: trajectories  of Type I are the closed trajectories around singular points on the line $x_1-x_2 = 0$; trajectories of Type II are the closed ones around singularities on $x_1-x_2 = \pm\pi$. 
Trajectories in Region III are the ones that are not closed. 

Analogously to the case of the M\"obius band, we deduce that in Regions I and II the two point vortices rotate around each other and around the bottle; since they emulate the behaviour of a solitary point vortex when in close proximity to each other, a vortex pair will be stationary if and only if $C = 0, \pm\frac{\pi}{4}(\Gamma_1 + \Gamma_2),\pm\frac{\pi}{2}(\Gamma_1  + \Gamma_2)$ (these being the momentum values for which a solitary point vortex is stationary). Otherwise the motion will have a horizontal translational component. 

On the 'vertical' (non-closed) trajectories the two point vortices will move approximately as  point vortices in Region III of the M\"obius band do; but here, as we have mentioned, periodicity in $y_1$ is lost: if we 'glue' the torus according to the periodizations, the trajectories in this region will not necessarily become closed. Additionally, the motion is unbounded in $y_1$, and, therefore, in $y_2$ as well.

Employing the reasoning analogous to the one in the case of the  M\"obius band,  we demonstrate that the horizontal translational component (the integral $\int_0^T\dot{x}_1\mathrm{d}t$) of motion is nonzero for almost all values of $C$;  observe  that the motion very close to  relative equilibria on the line $x_1-x_2=0$ must have nonzero horizontal components.  

Lastly, observe how  $y_1$ is almost a monotonic function on the trajectories in this region; that means that on the induced trajectory on the Klein bottle,  as the time progresses, the vortex will have alternating signs of its $y$-coordinate. 
\end{proof}

\end{document}